 \def\cocoa{{\hbox{\rm C\kern-.13em o\kern-.07em C\kern-.13em o\kern-.15em A}}}
\newtheorem{theorem}{Theorem}[section]
\newtheorem{lemma}[theorem]{Lemma}
\newtheorem{proposition}[theorem]{Proposition}
\newtheorem{corollary}[theorem]{Corollary}
\theoremstyle{definition}
\newtheorem{remark}[theorem]{Remark}
\newtheorem{definition}[theorem]{Definition}
\newtheorem{notation}[theorem]{Notation}
\newtheorem{problem}[theorem]{Problem}
\newcommand {\Hom}{\mathcal{H}\kern -0.25ex{\mathit om}}
\newcommand {\Ext}{\mathrm{Ext}}
\newcommand {\ext}{\mathrm{ext}}
\newcommand {\Sing}{\mathrm{Sing}}
\newcommand {\Hilb}{\mathcal{H}\kern -0.25ex{\mathit ilb\/}}
\newcommand {\ft}{\mathfrak{t}}
\newcommand {\fm}{\mathfrak{m}}
\newcommand {\fe}{\mathfrak{e}}
\newcommand {\fb}{\mathfrak{b}}
\newcommand{\Pic}{\operatorname{Pic}}
\newcommand{\Num}{\operatorname{Num}}
\newcommand{\Supp}{\operatorname{Supp}}
\newcommand{\h}{\operatorname{h}}
\newcommand{\Ho}{\operatorname{H}}
\newcommand{\rank}{\operatorname{rank}}
\begin{document}
\title[ Brill-Noether  on ruled surfaces]
{Brill-Noether Theory of stable vector bundles on ruled surfaces}

\author[L.\ Costa, I. Macías Tarrío]{L.\ Costa$^*$, I. Macías Tarrío$^{**}$}

\address{Facultat de Matem\`atiques i Inform\`atica,
Departament de Matem\`atiques i Inform\`atica, Gran Via de les Corts Catalanes
585, 08007 Barcelona, SPAIN } \email{costa@ub.edu}

\address{Facultat de Matem\`atiques i Inform\`atica,
Departament de Matem\`atiques i Inform\`atica, Gran Via de les Corts Catalanes
585, 08007 Barcelona, SPAIN} \email{irene.macias@ub.edu}

\date{\today}
\thanks{$^*$ Partially supported by PID2020-113674GB-I00-*2151.}
\thanks{$^{**}$ Partially supported by PID2020-113674GB-I00-*2151.}

\subjclass{14J60, 14J26}

\begin{abstract}
Let $X$ be a ruled surface over a nonsingular curve $C$ of genus $g\geq0$.  Let $M_H:=M_{X,H}(2;c_1,c_2)$ be the moduli space of $H$-stable rank 2 vector bundles $E$ on $X$ with  fixed Chern classes $c_i:=c_i(E)$ for $i=1,2$. The main goal of this paper is to contribute to a better understanding of  the geometry of the moduli space $M_H$ in terms of its Brill-Noether locus $W_H^k(2;c_1,c_2)$, whose points correspond to stable vector bundles in $M_H$ having at least $k$ independent sections. We deal with the non-emptiness of this Brill-Noether locus, getting in most of the cases sharp bounds for the values of $k$ such that   $W_H^k(2;c_1,c_2)$  is non-empty. 
\end{abstract}


\maketitle

\tableofcontents

\section{Introduction}

Let $X$ be a smooth projective variety of dimension $n$ over an algebraically closed field $K$ of characteristic $0$ and let $M_H=M_{X,H}(r;c_1,\dots,c_s)$ be the moduli space of stable rank $r$ vector bundles $E$ with respect to an ample divisor $H$ on $X$ and with fixed Chern classes $c_i:=c_i(E)$ for $i=1,\dots, s:=\min\{r,n\}$.  

Moduli spaces of stable vector bundles were defined in 1970 by Murayama.
Since then, many authors have done a systematic study of these moduli spaces but in general, in spite of great contributions, a lot of questions remain open.

One way of studying the geometry of these moduli spaces is by studying their subvarieties.
A Brill-Noether locus $W_H^k=W_H^k(r;c_1,\dots, c_s)$ is a subvariety of $M_{X,H}(r;c_1,\dots,c_s)$ whose points correspond to stable vector bundles having at least $k$ independent sections and, roughtly speaking, the Brill-Noether Theory describes the geometry of these subvarieties.

Classical Brill-Noether Theory deals with line bundles (respectively rank $r$ stable bundles) on smooth curves. In the case of line bundles on projective curves $C$ of genus $g$, Brill-Noether theory is concerned with the subvarieties of $\Pic^{d}(C)$ whose line bundles have at least $k$ independent global sections. Basic questions concerning non-emptiness, connectedness, irreducibility, dimension, singularities, cohomological classes ... have been answered when the curve $C$ is generic on the moduli space of curves of genus $g$. 

There are several natural ways to generalize the classical theory of Brill-Noether.
The first natural generalization concerns higher rank vector bundles on algebraic curves. 
During the last two decades, a great amount of job has been made around the Brill-Noether stratification of the moduli space of  stable rank $r$ vector bundles of degree $d$ on algebraic curves, giving rise to nice and interesting descriptions of these subvarieties. Nevertheless, it should be mentioned that in spite of big efforts, many questions concerning their geometry still remain open. The other way of generalizing this theory is considering line bundles on varieties of arbitrary dimension. Finally, we can go in both directions simultaneously.
 The Brill-Noether theory for rank $r$ bundles on a variety of arbitrary dimension was stablished in \cite{Costa}. If fact, if $X$ is a  smooth projective variety  of dimension $n$ and $H$ an ample divisor on $X$, the Brill-Noether locus $W_H^k(r;c_1,\dots, c_s)$ in $M_{X,H}(r;c_1,\dots, c_s)$ is defined as the set of stable vector bundles in $M_{X,H}(r;c_1,\dots, c_s)$ having at least $k$ independent sections. It is proved in \cite{Costa} that $W_H^k(r;c_1,\dots, c_s)$ has a natural structure of $k$-determinantal variety and that these subvarieties give a filtration of the moduli space.

\vspace{0.2cm}
In the context of the Brill-Noether Theory, one can immediately raise the following natural problems: 
\begin{problem}
\label{problembn}
    \begin{itemize}
        \item[(a)] Determine when the Brill-Noether locus $W_H^k$ is non-empty.
        \item[(b)] Study the smoothness of the Brill-Noether locus.
        \item[(c)] See if it is true that  $\rho_H^k>0$ implies that $W_H^k\neq \emptyset$ and $\rho_H^k<0$ implies that $W_H^k=\emptyset$.
        \item[(d)] Determine the irreducibility of $W_H^k$.
    \end{itemize}
\end{problem}

In this paper, we will focus the attention on moduli spaces of rank $2$ stable vector bundles on ruled surfaces.
In \cite{Qin1} Qin developed a theory of walls and chambers and proved that moduli spaces $M_{X,H}(2;c_1,c_2)$ only depend on the chamber of $H$. In this paper we will analize how this phenomena affects the Brill-Noether loci and in addition using this structure we will obtain our first contribution to Problem \ref{problembn} (a) (see Theorem \ref{teowall}).

 Concerning Problem \ref{problembn} (a) we will also prove in Theorem \ref{teo}:

\begin{theorem}

Let $X$ be a ruled surface over a nonsingular curve $C$ of genus $g\geq0$, $m\in\{0,1\}$, $c_2>>0$ an integer and $H\equiv\alpha C_0+\beta f$ an ample divisor on $X$ with $$\alpha (e+m)<\beta.$$ Then, for any $k$ in the range $$\max\{1,g\}\leq  k<\frac{1}{2\alpha}[\beta -\alpha (e- m+2g-2 )],$$ the Brill-Noether locus $W_H^k(2;C_0+\fm f,c_2)\neq\emptyset$ with $m=\deg(\fm)$.
\end{theorem}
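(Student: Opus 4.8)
The plan is to produce the required bundles by Serre's construction, realizing each $E$ as a locally free extension of line bundles whose sections are forced by one of the two factors. Set $b:=k+g-1$, fix a divisor $\fb$ on $C$ of degree $b$, and put
$$L_1:=\cO_X(\fb f),\qquad L_2:=\cO_X\bigl(C_0+(\fm-\fb)f\bigr),$$
so that $L_1\otimes L_2$ has first Chern class $C_0+\fm f$ and $L_1\cdot L_2=b$. Because $k\geq\max\{1,g\}$ we have $b\geq 2g-1$, whence $\h^1(C,\cO_C(\fb))=0$ and $\h^0(X,L_1)=\h^0(C,\cO_C(\fb))=b-g+1=k$. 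Since $c_2\gg0$, I would then fix a general reduced $0$-dimensional subscheme $Z\subset X$ of length $\ell:=c_2-b\geq0$ and consider the extensions
$$0\lra L_1\lra E\lra L_2\otimes\cI_Z\lra 0$$
parametrized by $\Ext^1(L_2\otimes\cI_Z,L_1)$.

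First I would record the numerics: any such $E$ has $c_1(E)=C_0+\fm f$ and $c_2(E)=L_1\cdot L_2+\ell=c_2$, and the cohomology sequence gives an injection $\Ho^0(X,L_1)\hookrightarrow\Ho^0(X,E)$, so $\h^0(E)\geq k$. Thus any stable $E$ arising this way is a point of $W_H^k(2;C_0+\fm f,c_2)$, and it remains only to exhibit one stable vector bundle in the family. Local freeness is the content of Serre's construction: for $Z$ in general position the Cayley--Bacharach condition is satisfied and the local-to-global map $\Ext^1(L_2\otimes\cI_Z,L_1)\to\Ho^0(\mathcal{E}xt^1(L_2\otimes\cI_Z,L_1))$ is surjective, so a general class $\xi$ yields a vector bundle $E$.

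The heart of the argument is stability, and here the choice of $b$ and of a general $Z$ is decisive. Writing the slope as $\mu_H(E)=\tfrac12\bigl(c_1(E)\cdot H\bigr)=\tfrac12\bigl(\beta-\alpha(e-m)\bigr)$, the stated range for $k$ is exactly the inequality $L_1\cdot H=\alpha b<\mu_H(E)$, so $L_1$ does not destabilize $E$. Any other saturated sub-line-bundle $M\subset E$ maps nontrivially to the quotient $L_2\otimes\cI_Z$, hence injects into $L_2$ as $M\cong L_2(-D)$ for an effective divisor $D$; the condition that the inclusion factors through $L_2\otimes\cI_Z$ forces the curve $D$ to pass through $Z$, while $\mu_H(M)\geq\mu_H(E)$ bounds $D\cdot H\leq\mu_H(E)-\alpha b$. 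Since $H$ is ample, the effective classes of bounded $H$-degree are finite in number and their linear systems have bounded dimension, so for $c_2\gg0$ the $\ell=c_2-b$ general points of $Z$ cannot all lie on any such $D$. Consequently no destabilizing $M$ exists and $E$ is $H$-stable.

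The step I expect to be the main obstacle is precisely this last one: giving a clean, finite list of the numerical types of potential destabilizing $M$ and showing that a general $Z$ of length $\ell\gg0$ avoids every divisor in the corresponding bounded family. This is where the hypotheses $c_2\gg0$ and the ampleness inequality $\alpha(e+m)<\beta$ (ensuring $H$ is ample and that $\alpha b<\mu_H(E)$ throughout the stated range of $k$) enter in an essential way; the remaining bookkeeping of Chern classes and sections is routine.
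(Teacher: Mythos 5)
Your construction coincides with the paper's: the same $b=k+g-1$, the same extension $0\to\cO_X(\fb f)\to E\to I_Z(C_0+(\fm-\fb)f)\to 0$ with $Z$ general of length $c_2-b$, the same count $\h^0E\geq\h^0\cO_C(\fb)=k$, and the same observation that the upper bound on $k$ is exactly $\cO_X(\fb f)\cdot H<\mu_H(E)$. Where you genuinely diverge is the second half of the stability check. The paper writes a sub-line bundle of the quotient as $\cO_X(G)$, $G=sC_0+\ft f$, disposes of $s\leq 0$ using $\beta>\alpha(e+m)$, and for $s=1$ uses genericity of $Z$ (Proposition 2.5) to force $t\leq m-c_2$, closing the estimate with the inequality $\beta<\alpha(2c_2+e-m)$, which it extracts from the hypothesis $M_H\neq\emptyset$ via an extension of Qin's Lemma 1.10 --- this is exactly why the theorem in the body carries the clause ``whenever $M_H\neq\emptyset$''. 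You instead argue by boundedness: a destabilizer is $L_2(-D)$ with $D$ effective, $Z\subset D$ and $D\cdot H\leq\mu_H(E)-\alpha b$, and the family of such $D$ has dimension bounded independently of $c_2$, so a general $Z$ avoids them all once $\ell=c_2-b$ exceeds that bound. This is sound, but note the trade-off: your threshold for ``$c_2\gg 0$'' depends on $H$, since the dimension of your family grows roughly like $(\mu_H(E)-\alpha b)/\alpha$, i.e.\ like $\beta/\alpha$, whereas the paper's argument works for every $c_2$ once $M_H\neq\emptyset$. In fact, sharpening your count recovers the paper's mechanism exactly: an effective $D$ with positive $C_0$-coefficient has $D\cdot H\geq\beta-\alpha e>\mu_H(E)-\alpha b$ (this uses $\beta>\alpha(e+m)$) and is excluded outright, so only fiber-type divisors $D=\fb'f$ survive; a general reduced $Z$ meets $\ell$ distinct fibers, so $Z\subset D$ forces $\deg\fb'\geq\ell$, and then $D\cdot H\leq\mu_H(E)-\alpha b$ reads $\alpha c_2\leq\mu_H(E)$, i.e.\ $\beta\geq\alpha(2c_2+e-m)$ --- precisely the inequality that Qin's lemma rules out. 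So the two proofs use the same construction, with your dimension count being a coarser (and, when made precise, equivalent) form of the paper's explicit case $s=1$; stated as is, it proves the theorem only in the reading where the largeness of $c_2$ may depend on $H$.
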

\vspace{0.1cm}

The fact that $W_H^k(2;C_0+\fm f, c_2)\neq\emptyset$ for all $\max\{1,g\}\leq k< \frac{1}{2\alpha}[\beta-\alpha(e-m+2g-2)]$ implies that  $W_H^k(2;C_0+\fm f, c_2)\neq\emptyset$ for all $1\leq k< \frac{1}{2\alpha}[\beta-\alpha(e-m+2g-2)]$.
Moreover, we will see that this bound is sharp (see Theorem \ref{thvacio}).

\vspace{0.2cm}
On the other hand we will prove:

\begin{theorem}

Let $X$ be a ruled surface over a nonsingular curve $C$ of genus $g\geq0$,  $c_2>>0$ an integer  and $H\equiv C_0+\beta f$ an ample divisor on $X$.
Then  $W_H^1(2;f,c_2)\neq\emptyset$ and $W_H^k(2;f,c_2)=\emptyset$ for all $k\geq3$.
\end{theorem}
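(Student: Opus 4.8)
The common engine for both statements is the elementary modification attached to a global section, together with the observation that $H\cdot f=1$, so $\mu_H(E)=\tfrac12 H\cdot f=\tfrac12$. Thus a rank $2$ bundle $E$ with $c_1(E)=f$ is $H$-stable exactly when every saturated line subbundle $L\subset E$ satisfies $H\cdot c_1(L)\le 0$. Given any nonzero $s\in H^0(E)$, I would first show that its zero locus has no divisorial part: if $s$ vanished along an effective divisor $D\neq 0$, then $\mathcal{O}_X(D)\hookrightarrow E$ would be a subsheaf with $H\cdot D>0$ (as $H$ is ample), violating stability. Hence $s$ vanishes only on a $0$-dimensional scheme $Z$ with $\deg Z=c_2$, and $E$ sits in
\[
0\to \mathcal{O}_X\to E\to \mathcal{I}_Z\otimes\det E\to 0,\qquad \det E\equiv f.
\]

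For the vanishing $W_H^k(2;f,c_2)=\emptyset$ when $k\ge 3$, I would take any $E\in M_H$ with $h^0(E)\ge 1$ and read off from the sequence above that $h^0(E)\le 1+h^0(\mathcal{I}_Z\otimes\det E)$. Since $c_2\gg 0$ the scheme $Z$ is nonempty, so it remains to prove $h^0(\mathcal{I}_Z\otimes\det E)\le 1$. When $g\ge 1$ this is immediate, because $\pi_*\det E$ is a degree one line bundle on $C$ and hence $h^0(\det E)\le 1$; when $g=0$ one has $h^0(\det E)=2$, but $|f|$ is base-point free, so a single point of $Z$ already drops $h^0$ by one. In either case $h^0(E)\le 2$, so no stable bundle with these invariants can have three independent sections.

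For the non-emptiness of $W_H^1(2;f,c_2)$, the plan is a Serre construction. I would choose $Z$ to be $c_2$ general points and build a non-split extension $0\to\mathcal{O}_X\to E\to\mathcal{I}_Z(f)\to 0$; by construction $h^0(E)\ge 1$ and $c_2(E)=\deg Z=c_2$. Local freeness of $E$ is governed by the Cayley--Bacharach condition with respect to $|K_X+f|$, and this is where $c_2\gg 0$ enters: once $\deg Z$ exceeds $h^0(K_X+f)$ and the points are general, the relevant linear system imposes no constraint and a locally free extension exists. Stability is then formal: for a saturated $L\subset E$ the composite $L\to\mathcal{I}_Z(f)$ is either zero, forcing $L\subset\mathcal{O}_X$ and $H\cdot c_1(L)\le 0$, or nonzero and hence injective, forcing $c_1(L)=f-D'$ with $D'\ge 0$ effective and $Z\subset D'$; since $Z\neq\emptyset$ we get $D'\neq 0$, so $H\cdot c_1(L)=1-H\cdot D'\le 0$. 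Thus $E$ is stable and lies in $W_H^1(2;f,c_2)$.

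The main obstacle is the local-freeness step in the Serre construction: one must produce an extension class nonvanishing at every point of $Z$, i.e. verify Cayley--Bacharach, and arrange the genericity of $Z$ so that this holds for all $c_2\gg 0$. By contrast, both the stability check and the upper bound $h^0(E)\le 2$ are soft consequences of $\mu_H(E)=\tfrac12$ and the smallness of the class $f$, namely $f^2=0$ and $h^0(\mathcal{O}_X(f))\le 2$.
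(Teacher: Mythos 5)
Your proof is correct. The non-emptiness half is essentially the paper's own argument: the same extension $0\to \mathcal{O}_X\to E\to I_Z(f)\to 0$, local freeness via Cayley--Bacharach, and the same stability check --- your integrality observation ($\mu_H(E)=\tfrac{1}{2}$ while $H\cdot c_1(L)$ is an integer) is exactly what powers the paper's Case 1/Case 2 computation, just stated more cleanly. One simplification you missed: since $K_X+f$ has $C_0$-coefficient $-2$, Lemma \ref{lema} gives $h^0\mathcal{O}_X(K_X+f)=0$, so the Cayley--Bacharach condition is vacuous for \emph{every} $Z$; no genericity of $Z$ or largeness of $c_2$ is needed for local freeness, and this step is not actually the obstacle you feared. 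Where you genuinely diverge is the emptiness half. Both proofs begin identically (a nonzero section can have no divisorial vanishing $D\neq 0$, since otherwise $1\le H\cdot D=\mu_H(\mathcal{O}_X(D))<\mu_H(E)=\tfrac{1}{2}$), so every stable $E$ with a section sits in $0\to\mathcal{O}_X\to E\to I_Z\otimes\det E\to 0$ with $|Z|=c_2>0$. The paper then proves the restriction Lemma \ref{lemaf} ($h^0E\le h^0(E|_f)$, from $h^0E(-f)=0$ by stability) and, in the case $h^0(I_Z\otimes\det E)\neq 0$, restricts the extension to a fiber disjoint from $Z$ to conclude $h^0E\le 2$. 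You instead bound $h^0(I_Z\otimes\det E)\le 1$ directly --- for $g\ge 1$ because $\det E$ is pulled back from a degree-one line bundle on $C$, for $g=0$ because $|f|$ is a base-point-free pencil and $Z\neq\emptyset$ --- which gives $h^0E\le 1+h^0(I_Z\otimes\det E)\le 2$ in one stroke. Your route is shorter and avoids both Lemma \ref{lemaf} and the paper's case distinction on whether $Z$ lies in a fiber; what the paper's detour buys is a reusable statement ($h^0E\le h^0(E|_f)$ for any $H$-stable $E$ with $c_1=f$) of independent interest. Both arguments are complete.
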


Finally, we will study in detail the locus $W_H^1(2;f,c_2)\backslash W_H^2(2;f,c_2)$ and we will contribute to Problem \ref{problembn} (b) showing that:
\begin{proposition}

Let $X$ be a ruled surface over a nonsingular curve $C$ of genus $g\geq0$, $c_2>>0$ an integer and $H\equiv C_0+\beta f$  an ample divisor on $X$. Then $W_H^1(2;f,c_2)\backslash W_H^2(2;f,c_2) $ is smooth and has the expected dimension, namely, 
$$ \rho_H^1=3c_2+g-1.$$
\end{proposition}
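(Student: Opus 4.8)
The plan is to analyze the Brill-Noether locus via the universal extension / deformation theory that governs determinantal loci, since by \cite{Costa} the locus $W_H^k$ carries a natural $k$-determinantal structure and its expected dimension $\rho_H^1$ is the standard one. First I would describe explicitly which stable bundles $E$ with $c_1(E)\equiv f$ lie in $W_H^1\setminus W_H^2$: these are precisely the $E$ with $h^0(E)=1$, so that up to scalar there is a unique nonzero section $s\in \Ho^0(E)$. Since $c_1\cdot c_1 = f\cdot f = 0$ and the section's zero locus has the expected codimension, I expect $s$ to vanish along a length-$c_2$ zero-dimensional subscheme $Z\subset X$ (the vanishing in codimension one being excluded by $c_1(E)=f$ being, up to the ruling, minimal), giving a short exact sequence $0\to\cO_X\to E\to \cI_Z(f)\to 0$.

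Next I would use this extension presentation to compute the tangent space to $W_H^1\setminus W_H^2$ at $[E]$ and compare it with $\rho_H^1$. The key point is that smoothness of the Brill-Noether locus at $[E]$ is controlled by the Petri-type map, i.e.\ the cup-product / multiplication map $\Ho^0(E)\otimes \Ho^0(E^\vee\otimes K_X)\to \Ho^0(E\otimes E^\vee\otimes K_X)$, or equivalently by the surjectivity of the relevant piece of the obstruction map. Concretely I would show that the obstruction to deforming the pair $(E,s)$ vanishes whenever $h^0(E)=1$, so the determinantal scheme structure is reduced and smooth of the expected dimension there. Using Riemann-Roch on the surface $X$ together with $c_1=f$, $c_1^2=0$, and the value of $K_X$ for a ruled surface, the expected dimension of $M_H$ is $4c_2 - c_1^2 - 3\chi(\cO_X) = 4c_2 - 3(1-g)$, and subtracting the expected codimension $\chi(E) = c_2 + (1-g)$-type correction coming from imposing one section yields exactly $\rho_H^1 = 3c_2 + g - 1$, which I would verify matches the determinantal expected dimension from \cite{Costa}.

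The main obstacle will be establishing that the determinantal locus is \emph{everywhere} reduced and smooth of this dimension on $W_H^1\setminus W_H^2$, rather than merely at a generic point. This amounts to proving a vanishing statement for the appropriate obstruction space (a first cohomology group controlling deformations of the section), and here the hypothesis $c_2\gg 0$ together with $c_1\equiv f$ should force the relevant $\Ext$ or $\Ho^1$ group to have the correct dimension uniformly. I would therefore invest most of the work in a cohomological lemma showing that for every $E$ with $h^0(E)=1$ the cup-product map above has maximal rank (equivalently, the Petri map is injective), which gives smoothness; the dimension count then follows formally from the determinantal description and Riemann-Roch. I expect the stability condition (with $H\equiv C_0+\beta f$) to be essential in ruling out the destabilizing sub-line-bundles that would otherwise produce extra sections or obstructions, so a careful bookkeeping of which sub-line-bundles can occur under this polarization will be the technical heart of the argument.
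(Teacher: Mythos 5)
Your outline does follow the same route as the paper --- realize each $E$ with $\h^0E=1$ as an extension $0\to\cO_X\to E\to I_Z(f)\to0$ via its unique section, then get smoothness and the dimension from the determinantal structure plus a Petri-type injectivity --- but the two steps you defer are exactly where the content lies, and one of them is set up with the wrong map. The Petri map you wrote, $\Ho^0(E)\otimes\Ho^0(E^{\vee}\otimes K_X)\to\Ho^0(E\otimes E^{\vee}\otimes K_X)$, is the literal transcription from the curve case and is vacuous here: by Serre duality $\Ho^0(E^{\vee}\otimes K_X)\cong\Ho^2(E)^{*}$, and $\h^2E=0$ for every $E\in M_H$ in this setting (this follows from the extension itself, or from Lemma \ref{lemacondicion} under the paper's standing assumption), so the map you propose to prove injective is the zero map on the zero space and controls nothing. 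What \cite[Corollary 2.6]{filimon} actually requires, and what the paper verifies, is (i) injectivity of
$$\mu_E\colon\Ho^0(E)\otimes\Ho^1(E^{*}\otimes\cO_X(K_X))\lra\Ho^1(E\otimes E^{*}\otimes\cO_X(K_X)),$$
the Serre dual of the cup product $\Ext^1(E,E)\to\Ho^0(E)^{*}\otimes\Ho^1(E)$, and (ii) that $[E]$ is a smooth point of $M_H$, i.e.\ $\h^2(E^{*}\otimes E)=0$; your proposal does not address (ii) at all.

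Second, the mechanism you predict for the key vanishing --- that $c_2\gg0$ forces the relevant cohomology group to have the correct dimension --- is not what makes the argument work; everything comes from the extension presentation and the shape of $K_X\equiv-2C_0+(\mathfrak{k}+\fe)f$ on a ruled surface. Concretely: since $\h^0E=1$, injectivity of $\mu_E$ is equivalent to injectivity of $\tilde{\mu}_E\colon\Ho^1(E^{*}(K_X))\to\Ho^1(E\otimes E^{*}(K_X))$, which, after tensoring the extension by $E^{*}(K_X)$, follows from $\h^0(E^{*}\otimes I_Z(K_X+f))=0$; this in turn follows from $\h^2(E(-f))=0$ via Serre duality, and that vanishing is read off the extension from $\h^0\cO_X(f+K_X)=\h^0\cO_X(K_X)=0$, both instances of Lemma \ref{lema} (the coefficient of $C_0$ is $-2$). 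The same kind of computation gives $\h^2(E^{*})=\h^2(E^{*}\otimes I_Z(f))=0$ and hence (ii). None of this uses $c_2\gg0$, which enters only through Theorem \ref{theomoduli} to know $\dim M_H$. Likewise, your justification that the unique section vanishes in codimension two (``$f$ is minimal up to the ruling'') should be replaced by the stability argument: if $D\neq0$ were the maximal effective divisor in the zero scheme of the section, then $D\cdot H$ is a positive integer, while stability forces $D\cdot H=\mu_H(\cO_X(D))<\mu_H(E)=\tfrac{1}{2}$, a contradiction; this is precisely where the hypothesis $H\equiv C_0+\beta f$ is used. (Your dimension count does land on $3c_2+g-1$, but note the expected codimension is $k(k-\chi)=1-\chi(2;f,c_2)=c_2+2g-2$, not $\chi(E)$ itself.) With these repairs --- the correct Petri map, the smooth-point condition, and the explicit cohomological vanishings in place of an appeal to $c_2\gg0$ --- your plan becomes precisely the paper's proof.
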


\vspace{0.2cm}
Now we will outline the structure of the paper.
In Section 2, we will fix some notation and we will prove some basic facts on cohomology of line bundles on ruled surfaces and we will recall the notion of stability of a vector bundle. We will also give the general definition of the Brill-Noether locus $W_H^k(r,c_1,\dots,c_s)$ and we will recall a result about its existence and its structure as a $k$-determinantal variety. Finally, we will introduce some technical lemmas that we will use  in the next sections. 
In Section 3, we will apply the theory of walls and chambers due to Qin to describe how Brill-Noether loci change when the ample divisor crosses a wall between two chambers and we will obtain results concerning non-emptiness of $W_H^k(2;c_1,c_2)$.
In Section 4, we will push forward the study of the non-emptiness of the Brill-Noether locus $W_H^k(2;c_1,c_2)$ (see Theorems \ref{teo}, \ref{thvacio}, \ref{thc1f} and \ref{thc1fv}).
Finally, we will prove the smoothness of $W_H^1(2;f,c_2)\backslash W^2_H(2;f,c_2)$ in Proposition \ref{smooth}.

\vspace{0.3cm}
\noindent\textbf{Acknowledgements}. The second author thanks professor Marian Aprodu for his advice and comments and for his hospitality during my stay  in Bucharest.

\vspace{0.2cm}

\section{Preliminaries}
The goal of this section is to fix some notation and  prove some results concerning  the cohomology of line bundles on ruled surfaces. In addition, we will recall some definitions needed in the sequel.

A ruled surface is a surface $X$, together with a surjective map $\pi:X\rightarrow C$ to a nonsingular curve $C$ of genus $g$ such that, for every point $y\in C$, the fibre $X_y$ is isomorphic to $\mathbb{P}^1$, and such that $\pi$ admits a section. It is also defined as the projectivization $X=\mathbb{P}(\mathcal{E})$ of a normalized rank $2$ bundle $\mathcal{E}$ on $C$. Let $\mathfrak{e}$ be the divisor on $C$ corresponding to the invertible sheaf $\wedge^2\mathcal{E}$ and let us define $e:=-\deg(\mathfrak{e})$. 

Let $C_0\subseteq X$ be a section and $f$ be a fibre. We have that $\Pic(X)\cong \mathbb{Z}\bigoplus \pi^{*}\Pic(C)$, where $\mathbb{Z}$ is generated by $C_0$. Also $\Num(X)\cong \mathbb{Z}\bigoplus \mathbb{Z}$, generated by $C_0$ and $f$  satisfying $C_0^2=-e$, $C_0\cdot f=1$ and $f^2=0$.
If $\fb$ 
is a divisor on $C$, we will write $\mathfrak{b}f$ instead of $\pi^{*}\mathfrak{b}$. Thus a divisor $D$ on $X$ can be written uniquely as $D= aC_0+\mathfrak{b}f$, being $\fb\in \Pic(C)$, and any element of $\Num(X)$ can be written as $D\equiv aC_0+bf$. 
The canonical divisor $K_X$ on $X$ is in the class $-2C_0+(\mathfrak{k}+\mathfrak{e})f$, where $\mathfrak{k}$ is the canonical divisor on $C$  of degree $2g-2$.

If the genus of the curve $C$ is zero then, for any integer $e\geq0$, the ruled surface is known as a Hirzebruch surface $X_e$. In this case we have $X_e\cong \mathbb{P}(\mathcal{E})=\mathbb{P}(\mathcal{O}_{\mathbb{P}^1}\bigoplus \mathcal{O}_{\mathbb{P}^1}(-e))$.
Since $\Pic(X_e)\cong\mathbb{Z}^2$, dealing with divisors on Hirzebruch surfaces we will not make any distinction between $D=aC_0+\fb f$ and its numerical class $D\equiv aC_0+bf$. 

Going back to the general case, for any divisor $D=aC_0+\mathfrak{b} f$ on $X$ with $a\geq0$, it follows from  Lemma V.2.4, Exercises III.8.3 and
III.8.4 of  \cite{hartshorne}, that 
\begin{equation}
\label{igualdadcohom}
    \h^i(X,\mathcal{O}_X(D))=\h^i(C, (S^a\mathcal{E})(\mathfrak{b}))
\end{equation}
 where $S^a\mathcal{E}$ stands for the $a$-th symmetric power of $\mathcal{E}$.
Moreover,
\begin{equation}
    \label{desigcoh}
\h^0(C,\mathcal{O}_C(\mathfrak{b}))\leq \h^0(C, (S^a\mathcal{E})(\mathfrak{b}))\leq \sum_{i=0}^a \h^0(C,\mathcal{O}_C(\mathfrak{b}+i\fe)),  
\end{equation}
for each divisor $\mathfrak{b}$ on $C$ (see for instance \cite[Section 2]{theta}).

\begin{notation}
\rm Usually, we will write $\h^0\mathcal{O}_X(D)$ to refer to $\h^0(X,\mathcal{O}_X(D))$.
\end{notation}

\vspace{0.3cm}
From now on, we will assume that $e\geq0$. In this case,  a divisor $D= aC_0+\mathfrak{b}f$ is ample if and only if $a>0$ and $\deg({\fb})>ae$ (see \cite[Chapter V, Corollary 2.8]{hartshorne}).

\vspace{0.4cm}

For effective divisors on $X$ we have the following: 

\begin{lemma}
\label{lema}
Let $X$ be a ruled surface over a  nonsingular curve $C$ of genus $g\geq0$ and $D=aC_0+\mathfrak{b}f$ be a divisor on $X$ with $b:=\deg(\fb)$. If $D$ is effective then $a,b\geq0$.
\end{lemma}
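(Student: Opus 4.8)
The plan is to convert each of the two coefficients of $D$ into a nonnegative quantity, either by pairing $D$ against a suitable nef class or by transporting the positivity of $\h^0\cO_X(D)$ down to the base curve $C$ via the identities already recorded.

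First I would dispose of $a\geq 0$. The fibre class $f$ is nef, since it is a fibre of the morphism $\pi\colon X\to C$: any irreducible curve either lies in a fibre and meets $f$ in $f^2=0$, or dominates $C$ and meets $f$ nonnegatively. As $D$ is effective and $f\cdot D=(aC_0+\fb f)\cdot f=a$, nefness of $f$ gives $a=D\cdot f\geq 0$ immediately.

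For $b\geq 0$ I would use the cohomological machinery. Because $D$ is effective we have $\h^0\cO_X(D)>0$, and since we now know $a\geq 0$ we may apply the identity \eqref{igualdadcohom} to obtain $\h^0(C,(S^a\mathcal{E})(\fb))>0$. The key move is to invoke the \emph{upper} bound in \eqref{desigcoh}: from $\h^0(C,(S^a\mathcal{E})(\fb))\leq\sum_{i=0}^a\h^0(C,\cO_C(\fb+i\fe))$ the positivity of the left-hand side forces $\h^0(C,\cO_C(\fb+i\fe))>0$ for at least one index $0\leq i\leq a$. Hence $\fb+i\fe$ is (linearly equivalent to) an effective divisor on $C$, so $\deg(\fb+i\fe)=b-ie\geq 0$, and since we are in the standing case $e\geq 0$ with $i\geq 0$ this yields $b\geq ie\geq 0$.

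A cleaner alternative for the second coefficient is purely intersection-theoretic: the class $C_0+ef$ is nef for $e\geq 0$, and a direct computation gives $D\cdot(C_0+ef)=b$, so $b\geq 0$ follows at once. The main obstacle in this variant is justifying that $C_0+ef$ is nef, which is exactly the content of Hartshorne's classification of irreducible curves on a ruled surface with $e\geq 0$ (Proposition V.2.20/2.21 of \cite{hartshorne}): every irreducible curve other than $C_0$ and $f$ is numerically $\alpha C_0+\beta f$ with $\alpha>0$ and $\beta\geq\alpha e$, whence its pairing with $C_0+ef$ equals $\beta\geq 0$, while $C_0\cdot(C_0+ef)=0$ and $f\cdot(C_0+ef)=1$. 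For the cohomological route the only delicate point is selecting the correct (upper) inequality in \eqref{desigcoh} and then using $e\geq 0$ to absorb the term $ie$; everything else is formal.
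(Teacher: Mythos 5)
Your proof is correct. For $b\geq 0$ your primary argument is the same as the paper's: pass to $\h^0(C,(S^a\mathcal{E})(\fb))$ via (\ref{igualdadcohom}) and use the upper bound in (\ref{desigcoh}) to find an index $i$ with $\h^0\cO_C(\fb+i\fe)>0$. In fact your write-up is slightly more careful than the paper's at this step: the paper asserts that positivity of the sum forces $\h^0\cO_C(\fb)\neq 0$, which does not literally follow (on a positive-genus curve, $\fb+i\fe$ effective does not imply $\fb$ effective, even when $\deg(\fe)\leq 0$); the correct conclusion, which you draw, is $\deg(\fb+i\fe)=b-ie\geq 0$, hence $b\geq ie\geq 0$ using $e\geq 0$ and $i\geq 0$. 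For $a\geq 0$ you differ from the paper in a mild but genuine way: the paper intersects $D$ with the ample divisors $H\equiv C_0+\beta f$ and lets $\beta\gg 0$ (after disposing of $D=0$ separately, since it needs the strict inequality $D\cdot H>0$), whereas you pair $D$ directly with the nef fibre class $f$; this is exactly the limiting direction of the paper's ample classes, and it removes the case distinction and the asymptotic argument. Your purely intersection-theoretic alternative for $b\geq 0$ --- pairing with $C_0+ef$, whose nefness rests on Hartshorne's classification of irreducible curves on a ruled surface with $e\geq 0$ --- is a route the paper does not take: it buys you a one-line proof that bypasses the symmetric-power cohomology entirely, at the cost of invoking that classification, while the paper's cohomological route stays within the two displayed facts (\ref{igualdadcohom}) and (\ref{desigcoh}) it has already set up for later use.
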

\begin{proof}

Let us first assume that $D$ is effective.
   If $D=0$ there is nothing to prove. So, let
   let us assume that $D= aC_0+\mathfrak{b}f \neq0$.

  First of all we will see that $a\geq0$.
   Since $D$ is an effective divisor, $D\cdot H>0$ for any ample divisor $H$ on $X$. In particular, taking the ample divisor on $X$,
  $H\equiv C_0+\beta f$ with $\beta>>0$ , we have $$0<D\cdot H=-ae+a\beta+b=a(\beta-e)+b$$ what implies that $a\geq0$.
   
Now, we will see that $b\geq0$. 
Since $D$ is effective and $a\geq0$,   it follows from  (\ref{igualdadcohom}) and (\ref{desigcoh}) that $$0< \h^0\mathcal{O}_X(D)=\h^0S^{a}\mathcal{E}(\mathfrak{b})\leq \sum_{i=0}^{a}\h^0\mathcal{O}_C(\fb+i\fe).$$
In particular, since $\deg(\fe)=-e\leq0$, $\h^0\mathcal{O}_C(\fb)\neq0$ and hence $b\geq0$.
\end{proof}

\begin{lemma}
\label{lema2}
Let $X$ be a ruled surface over a nonsingular curve $C$ of genus $g\geq0$ and $D=aC_0+\fb f$ on $X$ such that $a\geq0$. Then, if $b:=\deg(\fb)$, we have
\vspace{0.4cm}

$\h^0\mathcal{O}_X(aC_0+\fb f) \leq
 \left\{ \begin{array}{lcc}
             0 &   \mbox{for}  & b<0 \\
             \\ (a+1)(b+1) &  \mbox{for} & 0\leq b\leq  g-1, \thinspace g\geq1 \\
              \\ (a+1)g &  \mbox{for} & g\leq b\leq  2g-1,  \thinspace   g\geq1   \\
             \\ (a+1)(b+1-g) &  \mbox{for}  & b \geq 2g. 
             \end{array}
   \right.
$

\end{lemma}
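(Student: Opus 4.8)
The plan is to bound $\h^0\mathcal{O}_X(aC_0+\fb f)$ by reducing everything to line bundles on the base curve $C$ via the right-hand inequality in (\ref{desigcoh}). Since $a\geq 0$, that inequality gives
$$\h^0\mathcal{O}_X(aC_0+\fb f)=\h^0 S^a\mathcal{E}(\fb)\leq \sum_{i=0}^a \h^0\mathcal{O}_C(\fb+i\fe),$$
so it suffices to bound each summand. Because $\deg(\fe)=-e$ and $e\geq 0$, the twist $\fb+i\fe$ has degree $b-ie\leq b$ for every $i\geq 0$; thus all summands have degree at most $b$, and this degree only decreases as $i$ grows. This observation is what lets me treat the whole sum uniformly: I will show that in each of the four ranges of $b$ every summand is bounded by the same number, and then multiply by the number $a+1$ of summands.

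The key step is a short table of estimates for a line bundle $L$ on $C$ of degree $d$: (i) $\h^0(L)=0$ when $d<0$; (ii) $\h^0(L)\leq d+1$ when $d\geq 0$; (iii) $\h^0(L)\leq g$ when $g\leq d\leq 2g-1$; and (iv) $\h^0(L)=d+1-g$ when $d\geq 2g-1$. Items (i) and (iv) are immediate, the latter from Riemann--Roch once $\h^1(L)=\h^0(\mathfrak{k}-L)$ vanishes, which happens because $\deg(\mathfrak{k}-L)=2g-2-d<0$; and (ii) is the standard bound obtained by peeling off base points one at a time. The only estimate requiring a small argument is (iii): applying Riemann--Roch and Serre duality gives $\h^0(L)=d+1-g+\h^0(\mathfrak{k}-L)$, and since $0\leq\deg(\mathfrak{k}-L)=2g-2-d\leq g-2$ for $g\leq d\leq 2g-2$ one bounds $\h^0(\mathfrak{k}-L)\leq (2g-2-d)+1$ by (ii), yielding $\h^0(L)\leq g$; the boundary case $d=2g-1$ falls under (iv) and also gives $g$.

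With these estimates in hand the four cases of the lemma become routine bookkeeping on the sum $\sum_{i=0}^a \h^0\mathcal{O}_C(\fb+i\fe)$, using at each step that the summand degrees satisfy $b-ie\leq b$. For $b<0$ every summand has negative degree, so the sum is $0$. For $0\leq b\leq g-1$ each summand is $\leq (b-ie)+1\leq b+1$ by (ii), giving $(a+1)(b+1)$. For $g\leq b\leq 2g-1$ each summand has degree $\leq b\leq 2g-1$ and so is $\leq g$ by (i)--(iii), giving $(a+1)g$. For $b\geq 2g$ one checks that in every degree range the relevant bound among (i)--(iv) is at most $b+1-g$ (here $b+1-g\geq g+1$, which absorbs bounds (ii) and (iii)), giving $(a+1)(b+1-g)$; note that the case $g=0$ is covered by this range. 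The only point demanding care is verifying these uniform bounds at the interfaces between the degree ranges, but this is precisely what (iii) and the inequality $b-ie\leq b$ are designed to handle, so no genuine obstacle arises.
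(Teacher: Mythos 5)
Your proof is correct and follows essentially the same route as the paper: both reduce the statement via (\ref{igualdadcohom}) and (\ref{desigcoh}) to bounding $\sum_{i=0}^a \h^0\mathcal{O}_C(\fb+i\fe)$, and then estimate line-bundle cohomology on $C$ by degree considerations (Riemann--Roch, Serre duality, and the bound $\h^0\mathcal{O}_C(\mathfrak{d})\leq\deg(\mathfrak{d})+1$) in the same four ranges of $b$. If anything, your version is slightly more careful at one point: you bound each summand $\h^0\mathcal{O}_C(\fb+i\fe)$ directly through its own degree $b-ie\leq b$, whereas the paper passes through the intermediate inequality $\sum_{i=0}^{a}\h^0\mathcal{O}_C(\fb+i\fe)\leq (a+1)\h^0\mathcal{O}_C(\fb)$, which it does not justify (a drop in degree does not by itself force a drop in $\h^0$), though the final bounds are the same.
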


\begin{proof}

    Let $D=aC_0+\fb f$ be a divisor on $X$ with $a\geq0$.
    
    If $b<0$, it follows from Lemma \ref{lema} that $\h^0\mathcal{O}_X(aC_0+\fb f)=0$. Let us now consider the case $0\leq b\leq g-1$. Since $a\geq0$, by (\ref{igualdadcohom}) and (\ref{desigcoh}) we have
    $$\h^0\mathcal{O}_X(aC_0+\fb f)= \h^0S^{a}\mathcal{E}(\fb)\leq \sum_{i=0}^{a}\h^0\mathcal{O}_C(\fb+i\fe).$$
   By \cite[Chapter II, Exercise 1.15]{hartshorne}, for any divisor $\mathfrak{d}$ on $C$
   \begin{equation}
   \label{hartshorne}
       \h^0\mathcal{O}_C(\mathfrak{d})\leq \deg(\mathfrak{d})+1.
   \end{equation}
   In particular, we have
    $$\h^0\mathcal{O}_C(\fb+i\fe)\leq b-ie+1\leq b+1,$$ and thus
    $$\h^0\mathcal{O}_X(aC_0+\fb f)\leq (a+1)(b+1).$$

Let us now assume that $g\leq b\leq 2g-1$.
Since 
$a\geq0$, again by (\ref{igualdadcohom}) and (\ref{desigcoh}),
    $$\h^0\mathcal{O}_X(aC_0+\fb f)=\h^0S^{a}\mathcal{E}(\fb)\leq \sum_{i=0}^{a}\h^0\mathcal{O}_C(\fb+i\fe)
    \leq (a+1)\h^0\mathcal{O}_C(\fb)\leq (a+1)g,$$
where the last inequality follows from the fact that, since  $b\leq 2g-1$, if $\mathfrak{d}$ is a divisor of degree $2g-1$,  we get $$\h^0\mathcal{O}_C(\fb)\leq \h^0\mathcal{O}_C(\mathfrak{d})=g.$$

 Finally, let us prove the case $b\geq 2g$.
Since  $a\geq0$, by (\ref{igualdadcohom}) and (\ref{desigcoh}),
    $$\h^0\mathcal{O}_X(aC_0+\fb f)=\h^0S^{a}\mathcal{E}(\fb)\leq \sum_{i=0}^{a}\h^0\mathcal{O}_C(\fb+i\fe)
    \leq (a+1)\h^0\mathcal{O}_C(\fb)=(a+1)(b+1-g),$$
    where the last equality follows from the fact that, since $b\geq2g>2g-1 $, we have 
    ${\h^0\mathcal{O}_C(\fb)=b+1-g}$. 
\end{proof}

\vspace{0.2cm}
The following is a well-known result.
\begin{proposition}
\label{zgen}
    Let $X$ be a smooth projective surface, $D$ a divisor on $X$ and $Z\subset X$  a general 0-dimensional subscheme. If $|Z|\geq \h^0\mathcal{O}_X(D)$ then $\h^0I_Z(D)=0$.
\end{proposition}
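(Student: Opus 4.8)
The plan is to read $\h^0 I_Z(D)$ as the kernel of an evaluation map, prove injectivity for one carefully chosen configuration, and then spread this to the general subscheme by semicontinuity. Set $N:=\h^0\mathcal{O}_X(D)$ and $m:=|Z|\geq N$. Twisting the structure sequence of $Z$ by the line bundle $\mathcal{O}_X(D)$ gives
\[
0\lra I_Z(D)\lra \mathcal{O}_X(D)\lra \mathcal{O}_Z(D)\lra 0,
\]
and taking global sections identifies $\Ho^0(X,I_Z(D))$ with the kernel of the restriction map $\rho_Z\colon \Ho^0(X,\mathcal{O}_X(D))\to \Ho^0(Z,\mathcal{O}_Z(D))$. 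Since $Z$ is $0$-dimensional and $\mathcal{O}_X(D)$ is a line bundle, $\dim_K \Ho^0(Z,\mathcal{O}_Z(D))=\mathrm{length}(Z)=m$, so the assertion $\h^0 I_Z(D)=0$ is precisely the injectivity of $\rho_Z$.

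Next I would work over the Hilbert scheme $\mathrm{Hilb}^m(X)$, which for a smooth surface is irreducible (Fogarty) and carries a universal subscheme flat over the base. Flatness keeps $m=\dim_K\Ho^0(\mathcal{O}_Z(D))$ constant while $N=\h^0\mathcal{O}_X(D)$ is fixed, so the rank of $\rho_Z$ is lower semicontinuous and $\h^0 I_Z(D)=\dim_K\ker\rho_Z$ is upper semicontinuous in $Z$. Hence the locus $U:=\{Z\mid \h^0 I_Z(D)=0\}$ is open, and by irreducibility of $\mathrm{Hilb}^m(X)$ it suffices to produce a single $Z\in U$ in order to conclude that the general $Z$ lies in $U$.

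To exhibit such a $Z$, I would build a reduced configuration one point at a time. Put $W_0:=\Ho^0(X,\mathcal{O}_X(D))$, of dimension $N$. Suppose distinct points $p_1,\dots,p_j$ have been chosen so that $W_j:=\{s\in W_0 \mid s(p_1)=\dots=s(p_j)=0\}$ has dimension $N-j$. If $j<N$ then $W_j\neq 0$; picking any nonzero $s\in W_j$, the section $s$ cannot vanish identically on the irreducible surface $X$, so its nonvanishing locus is open and dense and we may choose a point $p_{j+1}$, distinct from the previous ones, with $s(p_{j+1})\neq 0$. Then the evaluation functional at $p_{j+1}$ is nonzero on $W_j$, so $W_{j+1}$ has dimension $N-j-1$. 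After $N$ steps $W_N=0$, i.e. evaluation at $\{p_1,\dots,p_N\}$ is injective. Adjoining any further $m-N$ distinct points yields a reduced length-$m$ subscheme $Z$ with $\Ho^0(X,I_Z(D))\subseteq W_N=0$, so $Z\in U$.

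The only genuinely delicate point is this inductive step, where a new vanishing condition must be imposed independently of the previous ones even when $\mathcal{O}_X(D)$ carries base points; this is resolved exactly by placing the new point off the zero locus of a surviving section, which is possible because a nonzero global section of a line bundle on an irreducible variety vanishes only on a proper closed subset. The flatness of the universal family and the ensuing semicontinuity, used to pass from one good configuration to the general subscheme, are standard.
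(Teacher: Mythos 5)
Your proof is correct. Note that the paper itself offers no argument for this proposition: it is stated as ``a well-known result'' and used as a black box, so there is no proof of record to compare against. Your write-up supplies the standard argument and does so completely: the identification of $\h^0 I_Z(D)$ with $\ker\rho_Z$, the upper semicontinuity of $\dim\ker\rho_Z$ over $\mathrm{Hilb}^{m}(X)$ (where the target of the evaluation map is locally free of rank $m$ by flatness and finiteness, so the rank of the bundle map is lower semicontinuous), the appeal to Fogarty's irreducibility so that one good configuration suffices for the general one, and the inductive construction of a reduced $Z$ imposing independent conditions by always placing the next point off the zero locus of a surviving section. The only conventions worth making explicit are that $X$ is irreducible (needed both for Fogarty's theorem and for the zero locus of a nonzero section to be a proper closed subset) and the trivial edge case $\h^0\mathcal{O}_X(D)=0$, which your argument handles vacuously.
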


\vspace{0.4cm}
Let $E$ be a rank 2 vector bundle on a smooth projective surface $X$. We denote by $c_i$ the Chern Classes of $E$, $c_i=c_i(E)\in \Ho^{2i}(X,E)$, for $i=1,2$.

\vspace{0.2cm}
Our aim is to study the moduli space of stable vector bundles on $X$ by means of its Brill-Noether locus. To do so, we end the section by recalling basic facts concerning stable vector bundles and Brill Noether loci.

\begin{definition}
Let $X$ be a smooth projective surface,  $H$ an ample divisor on $X$ and $E$  a vector bundle on $X$ with fixed Chern classes $c_i$ for $i=1,2$. Then $E$ is stable with respect to $H$ (or $H$-stable) if, for any subbundle $F$ of $E$, we have $$\mu_H(F):=\frac{c_1(F)\cdot H}{\rank(F)}<\frac{c_1(E)\cdot H}{\rank(E)}:=\mu_H(E).$$

Notice that the notion of stability depends on the ample divisor. Nevertheless, when there is no confusion we will simply say that $E$ is stable.
\end{definition}

\vspace{0.2cm}
Given $H$ an ample divisor on $X$, $c_1\in \Num(X)$ and $c_2\in H^4(X,\mathbb{Z})\cong \mathbb{Z}$, we denote by $M_{X,H}(r;c_1,c_2)$ the moduli space of $H$-stable rank $r$  vector bundles $E$ on $X$ with fixed Chern classes $c_i$. When there is no confusion, we will denote it by $M_H$.

\vspace{0.3cm}
For big values of $c_2$, we have a general result concerning irreducibility, smoothness and dimension of these moduli spaces. In the particular case of smooth ruled surfaces, it can be stated as follows (see for instance \cite{huybrechts}):
\begin{theorem}
\label{theomoduli}
Let $X$ be a ruled surface over a nonsingular curve $C$ of genus $g\geq0$, $H$ an ample divisor on $X$ and
${c_1, c_2 \in \Ho^{2i}(X,\mathbb{Z})}$ Chern classes. For all $c_2>> 0$, the moduli space $M_{X,H}(r;c_1, c_2)$ is a smooth, irreducible,
quasiprojective variety of dimension $2rc_2 -(r-1)c_1^2-(r^2-1)(1-g)$.
\end{theorem}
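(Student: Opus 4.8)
The plan is to deduce the three assertions — smoothness, the dimension formula, and irreducibility — from the deformation theory of sheaves on a surface together with the asymptotic ($c_2\gg0$) theory of Gieseker--Li and O'Grady. Recall that at a point $[E]\in M_H$ with $E$ stable (hence simple), the Zariski tangent space is canonically $\Ext^1(E,E)_0$ (the trace-free part), while obstructions to deforming lie in $\Ext^2(E,E)_0$. Consequently, once one shows $\Ext^2(E,E)_0=0$ for every $[E]\in M_H$, the moduli space is smooth and $\dim_{[E]}M_H=\dim\Ext^1(E,E)_0$ at each point. Quasiprojectivity is not at issue here: it comes for free from the GIT construction of the moduli space.

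For the dimension I would compute $\chi(E,E)=\sum_i(-1)^i\dim\Ext^i(E,E)$ by Hirzebruch--Riemann--Roch. Expanding $\operatorname{ch}(E^\vee)\operatorname{ch}(E)\cdot\operatorname{td}(X)$ and integrating over $X$ gives
\[
\chi(E,E)=r^2\chi(\mathcal{O}_X)+(r-1)c_1^2-2rc_2 .
\]
Splitting off the trace via $\Ext^i(E,E)=\Ext^i(E,E)_0\oplus \Ho^i(\mathcal{O}_X)$ yields $\chi(E,E)_0=(r^2-1)\chi(\mathcal{O}_X)+(r-1)c_1^2-2rc_2$. Since $E$ is simple, $\dim\Ext^0(E,E)_0=0$, so granting the vanishing $\dim\Ext^2(E,E)_0=0$ from the smoothness step and using $\chi(\mathcal{O}_X)=1-g$ for a ruled surface, we obtain $\dim M_H=\dim\Ext^1(E,E)_0=-\chi(E,E)_0=2rc_2-(r-1)c_1^2-(r^2-1)(1-g)$, as asserted.

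For smoothness I would use Serre duality on $X$ to rewrite $\Ext^2(E,E)_0\cong \operatorname{Hom}(E,E\otimes K_X)_0^\vee$; since $p_g(X)=0$ forces $\Ho^0(K_X)=0$, the trace part already vanishes and this is just $\operatorname{Hom}(E,E\otimes K_X)^\vee$. Now $E\otimes K_X$ is again $\mu_H$-stable of the same rank, with slope $\mu_H(E)+K_X\cdot H$, so a nonzero map $E\to E\otimes K_X$ would force $\mu_H(E)\le \mu_H(E)+K_X\cdot H$; hence whenever $K_X\cdot H<0$ there are no such maps and $M_H$ is smooth everywhere. A direct computation, with $K_X\equiv -2C_0+(2g-2-e)f$ and $H\equiv\alpha C_0+\beta f$, gives $K_X\cdot H=\alpha(e+2g-2)-2\beta$; combined with the ampleness bound $\beta>\alpha e$ this is negative for all $g\le1$ and, more generally, whenever $e\ge 2g-2$. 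In the residual cases ($g\ge2$, $e<2g-2$) the slope argument alone need not kill the obstruction for every $E$, and here I would invoke the asymptotic theory: O'Grady's codimension estimates force the obstructed (singular) locus to have codimension growing with $c_2$, so for $c_2\gg0$ one recovers smoothness of the expected dimension on all of $M_H$.

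Finally, for nonemptiness and irreducibility I would cite the theorem of Gieseker--Li and O'Grady as presented in \cite{huybrechts}: for fixed $r$ and $c_1$ there is a constant $N=N(r,c_1,H)$ such that for every $c_2\ge N$ the moduli space $M_{X,H}(r;c_1,c_2)$ is nonempty, normal, irreducible, and of the expected dimension. I expect this last input to be the genuine obstacle: the tangent/obstruction identification, the Riemann--Roch computation, and the slope argument for smoothness are all routine, whereas irreducibility (and the effective meaning of $c_2\gg0$) rests on the delicate asymptotic analysis bounding the boundary strata of non-locally-free and unstable sheaves in the Gieseker compactification.
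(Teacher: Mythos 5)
The paper itself offers no proof of Theorem \ref{theomoduli}: it is quoted from the literature (``see for instance \cite{huybrechts}'', i.e.\ the asymptotic results of Gieseker--Li and O'Grady), so your proposal can only be compared with the standard arguments, and in outline it follows them. Your Riemann--Roch computation is correct, and your identification of the tangent space with the trace-free part $\Ext^1(E,E)_0$ is the right reading here: the stated dimension $2rc_2-(r-1)c_1^2-(r^2-1)(1-g)=-\chi(E,E)_0$ is the \emph{fixed-determinant} expected dimension (if $\det E$ were only fixed numerically, the tangent space would be all of $\Ext^1(E,E)$ and the dimension would grow by $g=\h^1\mathcal{O}_X$), and it is the formula the paper later uses for $\rho_H^k$. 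The Serre-duality-plus-slope argument giving $\Ext^2(E,E)_0\cong\mathrm{Hom}(E,E\otimes K_X)^{\vee}=0$ whenever $K_X\cdot H<0$ is also correct, as is your computation that $K_X\cdot H=\alpha(e+2g-2)-2\beta<0$ for $g\leq 1$ or $e\geq 2g-2$.

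The genuine gap is your treatment of the residual case $g\geq2$, $e<2g-2$, $K_X\cdot H\geq0$. O'Grady-type estimates bound the codimension of the obstructed locus; they yield normality, expected dimension and \emph{generic} smoothness for $c_2\gg0$, but a closed subset of large codimension need not be empty, so they can never deliver the pointwise vanishing $\Ext^2(E,E)_0=0$ that your argument requires. In fact that vanishing genuinely fails in this regime, for every $c_2\gg0$: pick $H$ with $K_X\cdot H>0$ (possible precisely because $e<2g-2$), pick $D$ with $N:=2D+K_X-c_1$ effective, and let $E$ be a general locally free extension $0\to\mathcal{O}_X(D)\to E\to I_Z(c_1-D)\to0$ with $Z$ generic of large length (the Cayley--Bacharach condition is vacuous since $\h^0\mathcal{O}_X(2K_X-N)=0$ on a ruled surface). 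Such an $E$ is $H$-stable: a destabilizing $\mathcal{O}_X(G)\hookrightarrow E$ must inject into $I_Z(c_1-D)$, forcing $\h^0I_Z(c_1-D-G)\neq0$ with $c_1-D-G$ effective of $H$-degree at most $\frac{1}{2}K_X\cdot H$, and only finitely many effective numerical classes obey this bound, so a generic $Z$ of large length kills all such sections. Yet the composite $E\twoheadrightarrow I_Z(c_1-D)\hookrightarrow\mathcal{O}_X(c_1-D)\to\mathcal{O}_X(D+K_X)\hookrightarrow E\otimes K_X$ (the middle arrow being multiplication by a section of $N$) is a nonzero, automatically trace-free (as $p_g=0$) element of $\mathrm{Hom}(E,E\otimes K_X)\cong\Ext^2(E,E)_0^{\vee}$. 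Since $M_H$ does have the expected dimension for $c_2\gg0$, these are honest singular points. So the everywhere-smoothness assertion requires the hypothesis $K_X\cdot H<0$ (Walter's hypothesis for birationally ruled surfaces); it holds automatically for $g\leq1$, for $e\geq2g-2$, and for every polarization the paper actually uses later, but it cannot be removed, and consequently this step of your proof is not repairable as written. Note also that you located the ``genuine obstacle'' in the irreducibility statement; irreducibility is indeed the part that needs the deep asymptotic input, but it is the smoothness claim in the residual range that is actually false rather than merely hard.
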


One possible way to study the moduli space of stable vector bundles is by describing the Brill-Noether locus $W_H^k:=W_H^k(r;c_1,c_2)$ in $M_H$ which parametrizes stable rank $r$ vector bundles having at least $k$  independent sections, that is $$\Supp(W_H^k(r;c_1,c_2))=\{E\in M_{X,H}(r;c_1,c_2)| \thinspace \h^0E\geq k\}.$$

\vspace{0.2cm}

The Brill-Noether locus has the structure of a $k$-determinantal variety $W_H^k(r; c_1,c_2)$, and so we can talk about its expected dimension and singular locus. This is given by the following result (see \cite[Theorem 2.3]{Costa}).
\begin{theorem}

\label{theoestructura}
     Let $X$ be a smooth projective surface and consider the
moduli space $ M_{X,H}(r; c_1,c_2)$ of rank $r$, $H$-stable vector bundles $E$ on $X$ with
fixed Chern classes $ c_i$. Assume that for any $E \in M_H$, $\h^iE = 0$ for $ i \geq 2$.
Then, for any $k\geq0$, there exists a determinantal variety $W_H^k(r; c_1, c_2)$
 such that
$$\Supp(W_H^k(r; c_1, c_2)) = \{E\in M_H| \thinspace \h^0E\geq k\}.$$
Moreover, each non-empty irreducible component of $W_H^k(r;c_1, c_2)$ has dimension at
least $$ \dim(M_H)-k(k-\chi(r; c_1, c_2)),$$
and
$$W_H^{k+1}(r;c_1, c_2)\subset \Sing(W_H^k(r;c_1, c_2))$$
whenever $W_H^{k+1}(r;c_1, c_2)\neq M_H$.
\end{theorem}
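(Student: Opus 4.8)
The plan is to realize $W_H^k$ as a degeneracy (determinantal) locus of a two-term complex of vector bundles on $M_H$ obtained by pushing forward a universal family, and then to read off the dimension bound and the singular-locus inclusion from the general theory of such loci.

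\textbf{Construction of the complex.} First I would produce a universal object: \'etale-locally on $M_H$ (or, since $M_H$ is in general only a coarse space, after replacing the universal sheaf by a suitable twist) there is a flat family $\cE$ on $M_H\times X$ restricting to $E$ over $\{[E]\}\times X$. Writing $p\colon M_H\times X\to M_H$ for the projection, cohomology and base change produces a bounded complex of locally free $\cO_{M_H}$-modules representing $Rp_*\cE$ compatibly with base change. The hypothesis $\h^iE=0$ for $i\geq 2$ and all $E\in M_H$ forces the cohomology of this complex to be concentrated in degrees $0$ and $1$, so I can replace it by a two-term complex
$$\phi\colon A\longrightarrow B$$
of vector bundles on $M_H$ whose fibrewise kernel and cokernel compute $\h^0E$ and $\h^1E$; concretely, at a point $[E]$ one has $\dim\ker\phi_{[E]}=\h^0E$ and $\dim\coker\phi_{[E]}=\h^1E$, the virtue of the two-term complex being precisely that it commutes with base change.

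\textbf{Determinantal structure and expected codimension.} With this complex the condition $\h^0E\geq k$ reads $\dim\ker\phi_{[E]}\geq k$, equivalently $\rank\phi_{[E]}\leq \rank A-k$. Hence the support of $W_H^k$ is exactly the locus where $\phi$ drops rank by at least $k$, cut out locally by the $(\rank A-k+1)$-minors of $\phi$; this is the asserted determinantal structure. The expected codimension of $\{\rank\phi\leq \rank A-k\}$ for a map of bundles of ranks $\rank A,\rank B$ is $k(\rank B-\rank A+k)$. To identify it with $k(k-\chi)$ I use that fibrewise $\chi(E)=\h^0E-\h^1E=\dim\ker\phi_{[E]}-\dim\coker\phi_{[E]}=\rank A-\rank B$, a constant computed from $(r;c_1,c_2)$ by Riemann--Roch; thus $\rank B-\rank A=-\chi(r;c_1,c_2)$ and the expected codimension is $k(k-\chi(r;c_1,c_2))$. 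The remaining two assertions are then standard facts about degeneracy loci: every non-empty irreducible component of $\{\rank\phi\leq\rank A-k\}$ has codimension at most the expected value $k(k-\chi)$, giving the lower bound $\dim M_H-k(k-\chi)$; and the singular locus of $\{\rank\leq\rank A-k\}$ is contained in the deeper stratum $\{\rank\leq\rank A-k-1\}$, which after translating indices yields $W_H^{k+1}\subset\Sing W_H^k$ whenever $W_H^{k+1}\neq M_H$.

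\textbf{Main obstacle.} The delicate point is constructing $\phi$ globally on $M_H$, since a genuine universal family need not exist when $M_H$ is only coarse. I would circumvent this by working on the Quot scheme whose GIT quotient is $M_H$ (where a universal quotient, and hence the complex, does exist) and descending, or by using a quasi-universal (twisted) sheaf. In either case the determinantal structure, the codimension bound, and the singular-locus inclusion are local statements on $M_H$ and are insensitive to this ambiguity, so they glue to the global assertions above.
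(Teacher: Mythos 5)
The paper offers no proof of this theorem at all—it is quoted verbatim from \cite[Theorem 2.3]{Costa}—and your proposal correctly reconstructs the standard argument of that reference: replace $Rp_*\cE$ by a two-term complex $\phi\colon A\to B$ of vector bundles compatible with base change (legitimate precisely because $\h^iE=0$ for $i\geq2$ lets one trim the complex to degrees $0,1$, and constructed on the Quot scheme or via a quasi-universal sheaf to evade the possible nonexistence of a universal family on the coarse space), so that $W_H^k$ is the locus $\{\rank\phi\leq\rank A-k\}$, with expected codimension $k(k-\chi(r;c_1,c_2))$ since fibrewise exactness of $0\to \Ho^0E\to A\to B\to \Ho^1E\to 0$ gives $\rank A-\rank B=\chi$, and the dimension bound is the standard one for degeneracy loci. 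The only slip is directional: you assert that the singular locus of $\{\rank\phi\leq\rank A-k\}$ is \emph{contained in} the deeper stratum, which is false for general degeneracy loci (already for $A=B=\cO_X$ and $\phi$ a function cutting out a nodal curve, the node is a singular point outside the empty deeper stratum); the standard fact you actually need—and which your final conclusion correctly states—is the reverse inclusion, valid because every partial derivative of the defining minors is a combination of smaller minors and hence vanishes along the deeper stratum, so points of $W_H^{k+1}$ fail the Jacobian criterion for $W_H^k$ whenever $W_H^{k+1}\neq M_H$.
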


\vspace{0.2cm}

\begin{remark}
    By definition, the different Brill-Noether loci form a filtration of the moduli space $M_{X,H}(r;c_1,c_2)$,
    $$M_H\supseteq W_H^1(r;c_1,c_2)\supseteq W_H^2(r;c_1,c_2)\supseteq \cdots \supseteq W_H^{k}(r;c_1,c_2)\supseteq W_H^{k+1}(r;c_1,c_2)\supseteq \cdots$$
\end{remark}

\vspace{0.2cm}

\begin{definition}
    The expected dimension of the Brill-Noether locus is defined as $$\rho_H^k:=\rho_H^k(r;c_1,c_2)=\dim M_{X,H}(r;c_1,c_2)-k(k-\chi(r;c_1,c_2)).$$ By the Riemann-Roch theorem,  $\chi(r;c_1,c_2)$ can be expressed as 
     $$\chi(r;c_1,c_2)=r(1+p_a(X))-\frac{c_1K_X}{2}+\frac{c_1^2}{2}-c_2.$$
    
    In particular, if $X$ is a ruled surface over a smooth curve $C$ of genus $g$, then
    $$\chi(r;c_1,c_2)=r(1-g)-\frac{c_1\cdot [-2C_0+(\mathfrak{k}+\mathfrak{e})f]}{2}+\frac{c_1^2}{2}-c_2.$$
\end{definition}

\begin{remark} \rm
\rm Under the assuption $c_2>>0$, the moduli space has the dimension given by Theorem \ref{theomoduli} and hence, if $X$ is a smooth ruled surface over a nonsingular curve of genus $g$, $$\rho_H^k:=\rho_H^k(r;c_1,c_2)=2rc_2-(r-1)c_1^2-(r^2-1)(1-g)-k(k-\chi(r;c_1,c_2)).$$
\end{remark}

\vspace{0.2cm}
\begin{lemma}
\label{lemacondicion}
Let $E$ be a rank $r$, $H$-stable  vector bundle on a ruled surface $X$ with 

$(c_1(E)-rK_X)\cdot H\geq0$. Then, $\h^2E=0$.
\end{lemma}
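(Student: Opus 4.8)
The plan is to use Serre duality to convert the vanishing of $\h^2E$ into a vanishing of $\h^0$ of a related bundle, and then exploit $H$-stability to rule out sections. Concretely, by Serre duality on the surface $X$ we have
\[
\h^2(X,E)=\h^0(X,E^\vee\otimes\omega_X)=\h^0(X,E^\vee\otimes\mathcal{O}_X(K_X)),
\]
so it suffices to show that the rank $r$ bundle $G:=E^\vee\otimes\mathcal{O}_X(K_X)$ has no global sections. Since a nonzero section of $G$ gives a nonzero map $\mathcal{O}_X\to G$, equivalently a nonzero map $E\to\mathcal{O}_X(K_X)$, my plan is to argue that $H$-stability of $E$ forbids such a map for degree reasons.

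First I would record the slope computation. A nonzero section of $G$ would produce a nonzero homomorphism $\varphi:E\to\mathcal{O}_X(K_X)$. The image of $\varphi$ is a subsheaf of the line bundle $\mathcal{O}_X(K_X)$, hence has rank $1$ and its $H$-degree is at most $K_X\cdot H$. On the other hand, the kernel of $\varphi$ is a rank $r-1$ subsheaf of $E$, so $H$-stability (in the form of the slope inequality for the quotient, equivalently for subsheaves) should force a numerical inequality that contradicts the hypothesis. The cleanest route is to observe that a nonzero map $E\to\mathcal{O}_X(K_X)$ makes $\mathcal{O}_X(K_X)$ a quotient (up to its image) and hence, by stability applied to the corresponding sub/quotient, one gets $\mu_H(E)<\mu_H(\operatorname{im}\varphi)\le K_X\cdot H$. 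Writing out $\mu_H(E)=\tfrac{c_1(E)\cdot H}{r}$, this reads $c_1(E)\cdot H< r\,K_X\cdot H$, i.e. $(c_1(E)-rK_X)\cdot H<0$, which is exactly the negation of the hypothesis $(c_1(E)-rK_X)\cdot H\ge 0$. Hence no such map exists, $\h^0(G)=0$, and therefore $\h^2E=0$.

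The main obstacle I anticipate is handling the subsheaf/quotient bookkeeping carefully enough to get a clean strict inequality, since stability is stated for \emph{subbundles} of $E$ while $\ker\varphi$ and $\operatorname{im}\varphi$ are a priori only subsheaves (which may fail to be saturated). I would deal with this by passing to the saturation of $\ker\varphi$ in $E$: saturating only increases (or keeps equal) the $H$-degree, so the induced rank $r-1$ subbundle $\widetilde{F}$ satisfies $\mu_H(\widetilde F)\ge \mu_H(\ker\varphi)$, and $\mu_H(\ker\varphi)$ relates to $\mu_H(E)$ and $\mu_H(\operatorname{im}\varphi)$ through the short exact sequence $0\to\ker\varphi\to E\to\operatorname{im}\varphi\to 0$ via additivity of $c_1\cdot H$. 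Combining the stability inequality $\mu_H(\widetilde F)<\mu_H(E)$ with these comparisons yields the desired contradiction. A minor point to verify along the way is that $r\ge 2$ (so that $\ker\varphi$ is a genuine positive-rank subbundle to which stability applies); the case $r=1$ is degenerate and either excluded by context or handled directly, since for a line bundle a nonzero map to $\mathcal{O}_X(K_X)$ would force $c_1(E)\cdot H\le K_X\cdot H$, again contradicting the hypothesis.
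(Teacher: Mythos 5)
Your proof is correct and is essentially the paper's own argument viewed through duality: the paper applies Serre duality to get $\h^0(E^{*}(K_X))>0$, reads a nonzero section as an injection $\mathcal{O}_X(-K_X)\hookrightarrow E^{*}$, and derives the slope contradiction from stability of $E^{*}$, whereas you keep the map on the $E$ side as $E\to\mathcal{O}_X(K_X)$ and apply stability to the kernel/image. Both routes yield $(c_1(E)-rK_X)\cdot H<0$, contradicting the hypothesis; your explicit saturation remark handles a bookkeeping point the paper leaves implicit.
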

\begin{proof}
    Let us suppose $\h^2E>0$. This means, by Serre duality, that $\h^0E^{*}(K_X)>0$, what implies $\mathcal{O}_X(-K_X)\hookrightarrow E^*$. 
    Since $E$ is $H$-stable, the same is true for $E^{*}$, and thus we have $$\mu_H(\mathcal{O}_X(-K_X))<\mu_H(E^{*}),$$ which is equivalent to $$(-rK_X-c_1(E^{*}))\cdot H=(-rK_X+c_1(E))\cdot H<0,$$ what is a contradiction. Hence $\h^2E=0$.
\end{proof}

From now on, we will assume that $(c_1(E)-rK_X)\cdot H\geq 0 $ for all $E\in M_{X,H}(r;c_1,c_2)$ and therefore, by Theorem \ref{theoestructura}  and applying  Lemma \ref{lemacondicion}, the existence of the Brill-Noether locus $W_H^k(r;c_1,c_2)$ is guaranteed.

\vspace{0.1cm}
While dealing with the non-emptiness  of the Brill-Noether locus $W_H^k(r;c_1,c_2)$, notice that if $c_1\cdot H\leq 0$, then $W_H^k(r;c_1,c_2)=\emptyset$ for all $k\geq1$. In fact, if we have $E\in W_H^k(r;c_1,c_2)$ then $h^0E\neq0$ and thus $\mathcal{O}_X\hookrightarrow E$. By stability of $E$ and because $c_1\cdot H\leq0$, we have $0=\mu_H(\mathcal{O}_X)<\mu_H(E)\leq0$, what is a contradiction. Hence $W_H^1(r;c_1,c_2)=\emptyset$ whenever $c_1\cdot H\leq0$

 In particular, $W_H^1(r;0,c_2)=\emptyset$. Therefore we will assume that $c_1$ is effective and non zero, and thus $c_1\cdot H>0$ for any ample divisor $H$ . Furthermore, in the case of a rank $2$ vector bundle $E$, we have that $E$ is $H$-stable if and only if $E\otimes L$ is $H$-stable, for any line bundle $L$ on $X$, so it is natural to focus the attention on the cases $c_1\in\{C_0,C_0+f,f\}$.

\vspace{0.2cm}

\section{Walls and chambers structure in Brill-Noether loci}

We start the section recalling the theory of walls and chambers introduced by Qin, in the context of smooth surfaces (see for instance \cite{Qin1}). The main idea is that if $X$ is a smooth surface, the ample cone of $X$, $C_X$, has a structure of walls and chambers such that the moduli space $M_{H}(2;c_1,c_2)$ only depends on the chamber of $H$.
\vspace{0.2cm}

To introduce the main results of this theory, let us remember some basic definitions.

\begin{definition} \begin{itemize}
        \item[(i)] Let $\zeta \in \Num(X)\otimes \mathbb{R}$. We define $$W^{\zeta}=C_X\cap \{x\in \Num(X)| x\cdot\zeta =0\}.$$
        \item[(ii)] Define $\mathcal{W}(c_1,c_2)$ as the set whose elements consist of $W^{\zeta}$, where $\zeta$ is the numerical equivalence class of a divisor $D$ on $X$ such that $D+c_1$ is divisible by $2$ in $\Pic(X)$, $-(4c_2-c_{1}^2)\leq D^2<0$ and $|Z|=c_2+\frac{D^2-c_1^2}{4}$ for some locally complete intersection codimension-two cycle $Z$ in $X$.
        \item[(iii)] A wall of type $(c_1, c_2)$ is an element in $\mathcal{W}(c_1, c_2)$. A chamber of type $(c_1, c_2)$ is a
connected component of $C_X \backslash W(c_1, c_2)$. A $Z$-chamber of type $(c_1, c_2)$ is the intersection
of $\Num(X)$ with some chamber of type $(c_1, c_2)$.
\item[(iv)]  A face of type $(c_1, c_2)$ is $\mathcal{F} = W^{\zeta}\cap \mathcal{C}$, where $W^{\zeta}$
is a wall of type $(c_1, c_2)$ and $\mathcal{C}$ is a chamber of type $(c_1,c_2)$.

    \end{itemize}
\end{definition}

\begin{definition}
    Let $\zeta$ be a numerical equivalence class defining a wall of type $(c_1,c_2)$. We define $E_{\zeta}(c_1,c_2)$
 as the set of rank $2$ vector bundles $E$ on $X$ given by a non-trivial extension of type 
 $$0\rightarrow \mathcal{O}_X(D)\rightarrow E\rightarrow I_Z(c_1-D)\rightarrow 0$$
where $D$ is a divisor with $2D- c_1 \equiv \zeta$ and $Z$ is a locally complete intersection 0-cycle of
length $c_2+\frac{\zeta^2-c_1^2}{4}$.
\end{definition}

Qin proved in \cite[Proposition 2.2.5]{Qin1} and \cite[Remark 2.2.6]{Qin1} that the moduli space $M_{H}(2;c_1,c_2)$ only depends on the chamber of $H$, so he set $M_{\mathcal{C}}(c_1,c_2)$ to be $M_{H}(2;c_1,c_2)$ for some $H\in \mathcal{C}$. 

Once we know that $M_H(2;c_1,c_2)$ only depends on the chamber of $H$, the natural question that arises is how the moduli space changes when the polarization $H$ crosses a wall $W^{\zeta}$ between two chambers. The phenomena that occurs is summarized in the following result.

\begin{theorem}(\cite[Proposition 1.3.1]{Qin1})
\label{thwall1}
   Let $\mathcal{C}$ be a chamber and $\mathcal{F}$ be one of its faces. Then, as sets, 
    $$M_{\mathcal{C}}(c_1,c_2)=M_{\mathcal{F}}(c_1,c_2)\amalg (\amalg_{\zeta} E^k_{\zeta}(c_1,c_2))$$
    where $\zeta$ satisfies $\zeta\cdot H<0$ for some $H\in \mathcal{C}$, and runs over all numerical equivalence classes which define the wall containing $\mathcal{F}$.
\end{theorem}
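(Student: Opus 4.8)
The plan is to carry out the stability analysis that underlies Qin's wall-crossing result directly in our setting, comparing $H$-stability for a polarization $H$ in the chamber $\mathcal{C}$ with $H'$ lying on the face $\mathcal{F}$. The guiding observation is that for a rank-$2$ bundle $E$ with fixed Chern classes $(c_1,c_2)$, $H$-stability fails exactly when there is a sub-line-bundle $\mathcal{O}_X(D)\hookrightarrow E$ with $(2D-c_1)\cdot H\geq 0$. Writing $\zeta:=2D-c_1$, whether this inequality is strict, an equality, or reversed is governed entirely by the sign of $\zeta\cdot H$, and this sign is constant on each chamber and vanishes precisely on the wall $W^{\zeta}$. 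Thus the decomposition is produced by tracking which sub-line-bundles pass from non-destabilizing (in $\mathcal{C}$) to exactly slope-equalizing (on $\mathcal{F}$).

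First I would establish the inclusion at the level of stable loci: since strict stability is an open condition and the strict inequality $(2D-c_1)\cdot H<0$ can at worst degenerate to an equality as $H$ approaches the boundary, every $H'$-stable bundle (for $H'\in\mathcal{F}$) remains $\mathcal{C}$-stable, so $M_{\mathcal{F}}(c_1,c_2)\subseteq M_{\mathcal{C}}(c_1,c_2)$. The difference therefore consists exactly of those $E$ that are $\mathcal{C}$-stable but only strictly semistable (not stable) with respect to $H'\in\mathcal{F}$; these are the bundles that fail to appear in the stable moduli space $M_{\mathcal{F}}(c_1,c_2)$.

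Next, for such an $E$, strict semistability on $\mathcal{F}$ yields a destabilizing sub-line-bundle $\mathcal{O}_X(D)$ with $(2D-c_1)\cdot H'=0$ for $H'\in\mathcal{F}$, so that $\zeta:=2D-c_1$ is orthogonal to $\mathcal{F}$ and hence defines a wall $W^{\zeta}$ containing $\mathcal{F}$. The quotient $E/\mathcal{O}_X(D)$ is torsion-free of rank one, hence of the form $I_Z(c_1-D)$ for a codimension-two cycle $Z$, and $E$ sits in a non-trivial extension $0\to\mathcal{O}_X(D)\to E\to I_Z(c_1-D)\to 0$. The standard Chern-class computation, using $D=(c_1+\zeta)/2$ and $\zeta^2=4D^2-4c_1\cdot D+c_1^2$, forces $\deg(Z)=c_2+\tfrac{\zeta^2-c_1^2}{4}$, matching the length in the definition of $E_{\zeta}(c_1,c_2)$; the inequalities $-(4c_2-c_1^2)\leq\zeta^2<0$ encode $\deg(Z)\geq 0$ together with the negativity $\zeta^2<0$ that makes $W^{\zeta}$ a genuine wall. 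The orientation $\zeta\cdot H<0$ for $H\in\mathcal{C}$ selects the side on which $\mathcal{O}_X(D)$ fails to destabilize, which is precisely what allows $E$ to be $\mathcal{C}$-stable. I would then check the converse, that every non-trivial extension in $E_{\zeta}(c_1,c_2)$ with $\zeta\cdot H<0$ is $\mathcal{C}$-stable but strictly $\mathcal{F}$-semistable, so it lies in the difference.

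The main obstacle I anticipate is the disjointness of the union $\amalg_{\zeta}E_{\zeta}(c_1,c_2)$ together with the careful bookkeeping of the sign convention. I must rule out that a single $\mathcal{C}$-stable $E$ is captured simultaneously by two distinct classes $\zeta,\zeta'$ orthogonal to $\mathcal{F}$, which requires identifying the maximal-slope destabilizing sub-bundle along $\mathcal{F}$ and controlling its uniqueness, and I must verify that extensions associated to different $\zeta$ produce pairwise non-isomorphic bundles. This is where the hypothesis that $\mathcal{F}$ is a face of the single chamber $\mathcal{C}$ is essential, since it guarantees that all relevant classes lie on one wall and point to the same side; making the correspondence between destabilizing sub-bundles and numerical classes precise is the technical heart of the argument, and in practice I would invoke Qin's framework (\cite[Proposition 1.3.1]{Qin1}) for these bookkeeping details.
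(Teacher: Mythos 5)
The first thing to note is that the paper does not prove this statement at all: it is quoted verbatim from Qin (\cite[Proposition 1.3.1]{Qin1}) and used as an imported black box, so there is no internal proof to compare against. Your sketch does reconstruct, correctly in outline, the mechanism behind Qin's result: rank-$2$ stability is tested on sub-line-bundles, the sign of $\zeta\cdot H$ with $\zeta=2D-c_1$ controls destabilization, the bundles lost when passing to the face are non-trivial extensions of $I_Z(c_1-D)$ by $\mathcal{O}_X(D)$, and the Chern class computation yields $|Z|=c_2+\frac{\zeta^2-c_1^2}{4}$ together with the bounds $-(4c_2-c_1^2)\leq\zeta^2<0$.

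However, as a standalone proof your proposal has a genuine circularity: the step you call ``the technical heart'' --- uniqueness of the destabilizing class, disjointness of the union, and exhaustiveness of the decomposition --- is delegated to \cite[Proposition 1.3.1]{Qin1}, which is precisely the statement being proved. There is also a concrete mathematical gap earlier in the argument: your justification of $M_{\mathcal{F}}(c_1,c_2)\subseteq M_{\mathcal{C}}(c_1,c_2)$ via ``openness of stability'' plus ``strict inequalities degenerate at worst to equalities at the boundary'' does not suffice, because for an arbitrary sub-line-bundle $\mathcal{O}_X(D)\hookrightarrow E$ the class $\zeta=2D-c_1$ need not define a wall of type $(c_1,c_2)$, and for such classes the sign of $\zeta\cdot H$ is \emph{not} constant on chambers, so a strict inequality at a boundary point controls nothing in the interior. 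What is needed (and what Qin proves) is the lemma that any class which destabilizes some bundle with these Chern classes for one ample divisor, while the bundle is stable for another, must satisfy the wall conditions: one saturates the sub-sheaf so that the quotient is $I_Z(c_1-D)$, obtains $\zeta^2\geq -(4c_2-c_1^2)$ from $|Z|\geq 0$, and obtains $\zeta^2<0$ from the Hodge index theorem applied at an ample class orthogonal to $\zeta$. Only with this lemma does ``stability changes only across walls of type $(c_1,c_2)$'' --- and hence your inclusion, your claimed converse that non-trivial extensions in $E_{\zeta}(c_1,c_2)$ are $\mathcal{C}$-stable, and the exhaustiveness of the decomposition --- actually follow. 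Since the paper itself treats the statement as a citation, the honest options are either to do the same (cite rather than prove) or to supply the saturation/Hodge-index lemma and the stability of the extensions explicitly, without referring back to the proposition itself.
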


\noindent As a consequence 

\begin{theorem}(\cite[Theorem 1.3.3]{Qin1})
\label{thwall2}
    Let $\mathcal{C}_1$ and $\mathcal{C}_2$ be two adjancent chambers of type $(c_1,c_2)$ sharing a common wall $W^{\zeta}$ such that $\mathcal{C}_1$ lies above $\mathcal{C}_2$. Then, as sets,
    $$M_{\mathcal{C}_2}(c_1,c_2)=(M_{\mathcal{C}_1}(c_1,c_2)\backslash\amalg_{\zeta}E_{-\zeta}(c_1,c_2))\amalg(\amalg_{\zeta}E_{\zeta}(c_1,c_2))$$ where $\zeta$ runs over all normalized numerical equivalence classes (that is $\zeta\cdot f>0$) which represents the wall $W^{\zeta}$.
\end{theorem}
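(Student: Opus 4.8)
The plan is to derive the statement directly from Theorem \ref{thwall1}, applied to the face $\mathcal{F}=W^\zeta\cap\mathcal{C}_1=W^\zeta\cap\mathcal{C}_2$ common to the two adjacent chambers. The key mechanism is that a bundle $E\in E_\eta(c_1,c_2)$, sitting in an extension $0\to\mathcal{O}_X(D)\to E\to I_Z(c_1-D)\to 0$ with $2D-c_1\equiv\eta$, carries $\mathcal{O}_X(D)$ as a distinguished sub-line-bundle for which $\mu_H(\mathcal{O}_X(D))-\mu_H(E)=\tfrac12(2D-c_1)\cdot H=\tfrac12\,\eta\cdot H$. Thus this sub-bundle destabilizes exactly when $\eta\cdot H\geq0$ and does not when $\eta\cdot H<0$; on the wall itself $\eta\cdot H=0$, so $E$ is strictly semistable and hence excluded from $M_{\mathcal{F}}(c_1,c_2)$. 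This sign dichotomy is precisely what Theorem \ref{thwall1} globalizes.

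First I would fix the orientation. As a set $W^\zeta=W^{-\zeta}$ is a single wall, but the two classes $\zeta$ and $-\zeta$ yield genuinely different families $E_\zeta$ and $E_{-\zeta}$, recording opposite choices of the distinguished sub-line-bundle. Normalizing $\zeta$ by $\zeta\cdot f>0$ and using that $\mathcal{C}_1$ lies above $\mathcal{C}_2$, I would verify that $\zeta\cdot H_1>0$ for $H_1\in\mathcal{C}_1$ and $\zeta\cdot H_2<0$ for $H_2\in\mathcal{C}_2$; equivalently $(-\zeta)\cdot H_1<0$.

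Next I would apply Theorem \ref{thwall1} to each chamber relative to the common face $\mathcal{F}$. For $\mathcal{C}_1$, the normalized classes $\eta$ representing $W^\zeta$ with $\eta\cdot H_1<0$ are exactly the representatives of $-\zeta$, so Theorem \ref{thwall1} yields $M_{\mathcal{C}_1}(c_1,c_2)=M_{\mathcal{F}}(c_1,c_2)\amalg(\amalg_\zeta E_{-\zeta}(c_1,c_2))$. For $\mathcal{C}_2$, the relevant classes are the representatives of $\zeta$ itself, giving $M_{\mathcal{C}_2}(c_1,c_2)=M_{\mathcal{F}}(c_1,c_2)\amalg(\amalg_\zeta E_{\zeta}(c_1,c_2))$. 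Since the $E_{-\zeta}$ bundles are precisely the members of $M_{\mathcal{C}_1}$ that are absent from $M_{\mathcal{F}}$, removing them recovers $M_{\mathcal{F}}$; substituting into the second identity then gives $M_{\mathcal{C}_2}(c_1,c_2)=(M_{\mathcal{C}_1}(c_1,c_2)\backslash\amalg_\zeta E_{-\zeta}(c_1,c_2))\amalg(\amalg_\zeta E_\zeta(c_1,c_2))$, which is the claim.

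The substantive content—that crossing $W^\zeta$ alters the stable locus only through the extension families $E_{\pm\zeta}$, each stable on exactly one side of the wall—is already packaged inside Theorem \ref{thwall1}; once that is granted, what remains is pure bookkeeping of signs. The step I expect to be most delicate is therefore pinning down the orientation conventions: confirming that ``above'' together with the normalization $\zeta\cdot f>0$ forces $\zeta\cdot H_1>0$ and $\zeta\cdot H_2<0$, and then matching $-\zeta$ to the index set for $\mathcal{C}_1$ and $\zeta$ to that for $\mathcal{C}_2$ in the two disjoint unions without swapping the roles.
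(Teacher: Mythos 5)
Your proposal is correct and follows exactly the route the paper itself indicates: the paper gives no independent proof of Theorem \ref{thwall2}, presenting it (following \cite[Theorem 1.3.3]{Qin1}) precisely ``as a consequence'' of Theorem \ref{thwall1}, which is what you carry out by applying the face decomposition to both chambers and eliminating $M_{\mathcal{F}}(c_1,c_2)$. Your sign bookkeeping is also sound: on a ruled surface a class defining a wall cannot satisfy $\zeta\cdot f=0$ (else $\zeta^2\geq 0$), so every defining class is $\pm$ a normalized one, and for $\zeta\cdot f>0$ the pairing $\zeta\cdot H$ increases in the ``upward'' direction of the ample cone, giving $\zeta\cdot H_1>0$ on $\mathcal{C}_1$ and $\zeta\cdot H_2<0$ on $\mathcal{C}_2$ as you claim.
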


It is natural to ask ourselves what happens at the level of Brill-Noether loci. To this end, let us define
 $$W_{\mathcal{C}}^k(c_1,c_2):=\{E\in \mathcal{M}_{\mathcal{C}}(c_1,c_2) |  h^0E\geq k\}$$ and $$E_{\zeta}^k(c_1,c_2):=\{E\in E_{\zeta}| h^0E\geq k\}.$$

\vspace{0.1cm}
As a consequence of Theorem \ref{thwall1} and Theorem \ref{thwall2}  we can deduce

\begin{proposition}
\label{propwall}
\begin{itemize}
    \item[(a)]     Let $\mathcal{C}$ be a chamber and $\mathcal{F}$ one of its faces. Then, as sets, $$W^k_{\mathcal{C}}(c_1,c_2)=W^k_{\mathcal{F}}(c_1,c_2)\amalg (\amalg_{\zeta} E^k_{\zeta}(c_1,c_2))$$
    where $\zeta$ satisfies $\zeta\cdot H<0$ for some $H\in \mathcal{C}$, and runs over all numerical equivalence classes which define the wall containing $\mathcal{F}$.
    \item[(b)] Let $\mathcal{C}_1$ and $\mathcal{C}_2$ be two adjancent chambers of type $(c_1,c_2)$ sharing a common wall $W^{\zeta}$ such that $\mathcal{C}_1$ lies above $\mathcal{C}_2$. Then, as sets,
    $$W^k_{\mathcal{C}_2}(c_1,c_2)= (W^k_{\mathcal{C}_1}(c_1,c_2)\backslash\amalg_{\zeta}E^k_{-\zeta}(c_1,c_2))\amalg(\amalg_{\zeta}E^k_{\zeta}(c_1,c_2))$$
\end{itemize}

\end{proposition}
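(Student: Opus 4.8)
The plan is to derive Proposition \ref{propwall} directly from Qin's set-theoretic decompositions (Theorems \ref{thwall1} and \ref{thwall2}) by intersecting everything with the condition $\h^0E \geq k$. The key observation is that the Brill-Noether locus $W_H^k$ is simply the subset of the moduli space cut out by a cohomological condition on the global sections, and this condition is intrinsic to each bundle $E$ — it does not depend on the chamber or polarization $H$, only on $E$ itself. Therefore, whenever we have a disjoint-union decomposition of the moduli space as \emph{sets}, we may intersect each piece with $\{E \mid \h^0E \geq k\}$ and obtain a corresponding disjoint-union decomposition of the Brill-Noether loci, provided we check that the pieces behave compatibly with this intersection.

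For part (a), I would start from Theorem \ref{thwall1}, which gives the set-theoretic equality
$$M_{\mathcal{C}}(c_1,c_2)=M_{\mathcal{F}}(c_1,c_2)\amalg (\amalg_{\zeta} E_{\zeta}(c_1,c_2)).$$
Intersecting both sides with the set $\{E \mid \h^0E \geq k\}$, the left-hand side becomes $W_{\mathcal{C}}^k(c_1,c_2)$ by definition. On the right-hand side, since the union is disjoint and intersection distributes over disjoint unions, I get $(M_{\mathcal{F}}(c_1,c_2)\cap \{\h^0E\geq k\})$ together with $\amalg_{\zeta}(E_{\zeta}(c_1,c_2)\cap \{\h^0E\geq k\})$. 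The first term is precisely $W_{\mathcal{F}}^k(c_1,c_2)$ and each term in the union is exactly $E_{\zeta}^k(c_1,c_2)$, by the definitions of $W_{\mathcal{F}}^k$ and $E_{\zeta}^k$ given just before the proposition. This yields the claimed formula.

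For part (b), the argument is analogous, starting from Theorem \ref{thwall2}. The only additional subtlety is the set difference: intersecting $M_{\mathcal{C}_1}(c_1,c_2)\backslash\amalg_{\zeta}E_{-\zeta}(c_1,c_2)$ with $\{\h^0E\geq k\}$ requires that intersection commutes with set difference, i.e. $(A\setminus B)\cap S = (A\cap S)\setminus(B\cap S)$, which holds for arbitrary sets. Applying this together with distributivity over the remaining disjoint union gives
$$W^k_{\mathcal{C}_2}(c_1,c_2)= (W^k_{\mathcal{C}_1}(c_1,c_2)\backslash\amalg_{\zeta}E^k_{-\zeta}(c_1,c_2))\amalg(\amalg_{\zeta}E^k_{\zeta}(c_1,c_2)),$$
as desired.

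I do not expect any serious obstacle here, since the result is essentially a formal consequence of the fact that the Brill-Noether condition is a condition on individual bundles and hence respects the set-theoretic decompositions. The one point that deserves a sentence of care is the compatibility of the disjointness: because the unions in Qin's theorems are disjoint as subsets of a common ambient space of isomorphism classes of bundles, and the condition $\h^0E \geq k$ is well-defined on each isomorphism class, the intersected pieces remain pairwise disjoint. It is worth emphasizing explicitly that these equalities hold only \emph{as sets} and not as schemes, matching the statements of Theorems \ref{thwall1} and \ref{thwall2}, so no scheme-theoretic structure on the Brill-Noether loci is claimed or used in this argument.
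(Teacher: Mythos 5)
Your proposal is correct and coincides with the paper's own treatment: the paper offers no written proof of Proposition \ref{propwall}, presenting it as an immediate consequence of Theorems \ref{thwall1} and \ref{thwall2}, and your argument — intersecting Qin's set-theoretic decompositions with the polarization-independent condition $\h^0E\geq k$, using distributivity over disjoint unions and $(A\setminus B)\cap S=(A\cap S)\setminus(B\cap S)$ — is exactly the intended deduction. The only point worth noting is that the paper's statement of Theorem \ref{thwall1} contains a typo ($E^k_{\zeta}$ should read $E_{\zeta}$), which your argument implicitly and correctly repairs.
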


We will use this wall crossing to prove the following result.

\begin{theorem}
    \label{teowall}
    Let $X$ be a ruled surface over a nonsingular curve $C$ of genus $g\geq0$, $m\in\{0,1\}$ and $c_2>>0$ an integer.
    Let us consider the family of numerical equivalence classes $$\zeta_{b}\equiv (2b-m)f-C_0$$ with $0<b=k-1+g<c_2$ and let $\mathcal{C}_b$ be the chamber such that $W^{\zeta_b}\cap \text{Closure}(\mathcal{C}_b)\neq \emptyset$ and $\zeta_b \cdot H<0$ for all $H\in \mathcal{C}_b$. Then, for any ample divisor $H\in \mathcal{C}_b$, $W_H^k(2;C_0+\fm f, c_2)\neq \emptyset$ whenever $M_H\neq \emptyset$.
    
\end{theorem}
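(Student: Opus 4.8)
The goal is to produce, for each admissible $k$, a single stable bundle $E$ in the chamber $\cC_b$ with $\h^0 E \geq k$, and the natural device is the wall-crossing formula of Proposition \ref{propwall}(a). The plan is to realize the desired sections by constructing $E$ as a nontrivial extension in $E_{\zeta_b}(c_1,c_2)$, i.e. a bundle of the form
$$
0 \lra \cO_X(D) \lra E \lra I_Z(c_1 - D) \lra 0,
$$
where $2D - c_1 \equiv \zeta_b \equiv (2b-m)f - C_0$, so that (using $c_1 = C_0 + \fm f$ with $\deg\fm = m$) one gets $D \equiv b f$ and $c_1 - D \equiv C_0 + (m-b)f$, and $Z$ is a length-$\left(c_2 + \tfrac{\zeta_b^2 - c_1^2}{4}\right)$ locally complete intersection $0$-cycle. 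The point of choosing $\zeta_b$ with $b = k-1+g$ is precisely to make $\cO_X(D) = \cO_X(bf)$ contribute enough sections.

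First I would verify that $E_{\zeta_b}(c_1,c_2)$ is genuinely part of the wall-crossing decomposition for $\cC_b$: the class $\zeta_b$ must define a wall of type $(c_1,c_2)$ in the sense of the definition (divisibility of $\zeta_b + c_1$ by $2$, the inequality $-(4c_2 - c_1^2) \le \zeta_b^2 < 0$, and existence of a suitable $Z$), and it must satisfy $\zeta_b \cdot H < 0$ for $H \in \cC_b$. The sign condition is built into the hypothesis on $\cC_b$; the numerical inequalities follow from $0 < b < c_2$ together with $c_2 \gg 0$, and I would compute $\zeta_b^2 = -2(2b-m) \cdot (C_0\cdot f)\,(\cdots)$ explicitly using $C_0^2 = -e$, $C_0\cdot f = 1$, $f^2 = 0$ to confirm $\zeta_b^2 < 0$ and that it lies in the required range. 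By Proposition \ref{propwall}(a), any $E \in E_{\zeta_b}^k(c_1,c_2)$ lying on the chamber side then sits inside $W^k_{\cC_b}(c_1,c_2)$, so it suffices to exhibit one such $E$ with $\h^0 E \ge k$, and stability of the extension bundles in $E_{\zeta_b}(c_1,c_2)$ is guaranteed by Qin's setup since $\zeta_b$ defines a wall.

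The heart of the argument is the cohomology count. From the extension sequence,
$$
\h^0 E \ \ge \ \h^0 \cO_X(D) \ = \ \h^0 \cO_X(bf),
$$
and by the identification (\ref{igualdadcohom}) this equals $\h^0(C, \cO_C(\fb))$ with $\deg \fb = b = k-1+g$. Since $b \ge 2g$ precisely when $k \ge g+1$, Riemann–Roch on $C$ gives $\h^0\cO_C(\fb) = b + 1 - g = k$ for a general (or suitably chosen) $\fb$ of degree $b$, yielding $\h^0 E \ge k$. For the boundary cases where $b$ is smaller I would instead invoke the lower range $\max\{1,g\} \le k$ to check directly that $\h^0\cO_C(\fb) \ge k$ still holds (e.g. using $\h^0 \ge \deg - g + 1$ and handling $g = 0$, where $\max\{1,g\} = 1$, separately). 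This is where the exact lower endpoint $\max\{1,g\}$ of the range enters, and matching it against the Riemann–Roch jump at $b = 2g$ is the step requiring care.

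I expect the main obstacle to be bookkeeping rather than conceptual: one must confirm that the chosen $\zeta_b$ actually produces a nonempty wall (the constraint $b < c_2$ and $c_2 \gg 0$ should secure the existence of $Z$ of the correct length and the inequality $\zeta_b^2 \ge -(4c_2 - c_1^2)$), and that the extension can be taken nontrivial so that $E$ is locally free rather than split — this uses $\Ext^1(I_Z(c_1-D), \cO_X(D)) \neq 0$, which for $c_2 \gg 0$ follows from a Riemann–Roch/Serre-duality estimate on $X$. The upper bound $k < \tfrac{1}{2\alpha}[\beta - \alpha(e - m + 2g - 2)]$ in Theorem \ref{teo} is the translation of the wall-existence range $b < c_2$ together with the ampleness condition $\alpha(e+m) < \beta$ into a bound on $k$; I would defer its precise derivation to the proof of Theorem \ref{teo} and here only record that $W^k_H \neq \emptyset$ whenever $M_H \neq \emptyset$ and $\cC_b$ is the indicated chamber.
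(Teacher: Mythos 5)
Your proposal follows essentially the same route as the paper's own proof: verify that $\zeta_b$ defines a non-empty wall of type $(C_0+\fm f,c_2)$, construct bundles in $E_{\zeta_b}$ as non-trivial extensions $0\to \cO_X(\fb f)\to E\to I_Z(C_0+(\fm-\fb)f)\to 0$ (non-triviality via the Serre duality/Riemann--Roch computation giving $\ext^1 = |Z| = c_2-b>0$), obtain stability from Qin's wall-crossing decomposition, and count sections via $\h^0E\geq \h^0\cO_C(\fb)\geq b+1-g=k$. The only differences are cosmetic --- the paper first reduces to $k\geq\max\{2,g\}$ via the Brill--Noether filtration so that $\h^1\cO_C(\fb)=0$ gives equality, whereas you use the Riemann--Roch inequality directly --- so the proposal is correct and matches the paper's argument.
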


\begin{proof}
 First of all, since we have the filtration $W_H^{l+1}\subseteq W_H^l\subseteq \cdots \subseteq W_H^1\subseteq M_H$, we can assume $k\geq \max\{2,g\}$ and thus $b\geq \max\{g+1,2g-1\}$.

 Notice that $\zeta_{b}\equiv (2b-m)f-C_0$ defines a non-empty wall of type $(C_0+\fm f,c_2)$. In fact,
       $$(\zeta_{b})^2= ((2b-m)f-C_0)^2= -4b-e+2m.$$ Since $m\in\{0,1\}$ and by the lower and upper bounds of $b$, we have $$-4c_2-e\leq -4b-e+2m <0.$$  On the other hand,
         $\zeta_{b}+C_0+\fm f = 2\fb f $ is divisible by $2$ in $\Pic(X)$ and
         $H\equiv C_0+ (2b+e-m)f$ is an ample divisor on $X$ such that $\zeta_b\cdot H=0$.
         Hence, $\zeta_b$ defines a non-empty wall of type $(C_0+\fm f, c_2)$.

Let us consider $E_{\zeta_b}(C_0+\fm f, c_2)$, which is by definition the family of rank 2 vector bundles given by non-trivial extensions of type
\begin{equation}
    \label{extensionwall}
 0\rightarrow \mathcal{O}_X(\fb f)\rightarrow E\rightarrow I_Z(C_0+(\fm-\fb)f)\rightarrow 0,
\end{equation}
where $\fb f$ is a divisor such that $\zeta_b\equiv (2b-m)f-C_0$ with $b=\deg(\fb)$ and $Z$ a  0-dimensional subscheme of length $|Z|=c_2-b$.

\vspace{0.2cm}
\noindent{\bf{Claim}}: $E_{\zeta_b}(C_0+\fm f, c_2)$ is non-empty.

\noindent {\bf{Proof of the Claim}}:
It is enough to see that $\Ext^1(I_Z(C_0+(\fm-\fb)f),\mathcal{O}_{X}(\fb f))\neq0$.
We have that $\ext^1(I_Z(C_0+(\fm-\fb)f),\mathcal{O}_{X}(\fb f))=h^1I_Z(C_0+(\fm-2\fb)f+K_X)$.
If we consider the short exact sequence
\begin{equation}
    0\rightarrow I_Z\rightarrow \mathcal{O}_X\rightarrow \mathcal{O}_Z\rightarrow 0,
    \label{sucz}
\end{equation}
we tensor it by $\mathcal{O}_X(C_0+(\fm -\fb)f+K_X)$ and we take cohomology,
we get $$\h^0I_Z(C_0+(\fm-2\fb)f+K_X)\leq \h^0\mathcal{O}_X(C_0+(\fm-2\fb)f+K_X))=0,$$
where the last equality follows from Lemma \ref{lema}.

On the other hand,
if we consider again the exact sequence  (\ref{sucz})
and we take cohomology, we get 
$$\h^2I_Z(C_0+(\fm-2\fb)f+K_X)=\h^2\mathcal{O}_X(C_0+(\fm-2\fb)f+K_X)$$ and, by duality and Lemma \ref{lema},  $$\h^2\mathcal{O}_X(C_0+(\fm-2\fb)f+K_X)=\h^0\mathcal{O}_X(-C_0-(\fm-2\fb)f)=0.$$
Therefore,  $$\h^1I_Z(C_0+(\fm-2\fb)f+K_X)=-\chi I_Z(C_0+(\fm-2\fb)f+K_X).$$  Considering the exact sequence  (\ref{sucz}) and the Riemann-Roch Theorem,
$$-\chi I_Z(C_0+(\fm-2\fb)f+K_X)=|Z|-\chi \mathcal{O}_X(C_0+(\fm-2\fb)f+K_X)=|Z|.$$ 

\vspace{0.3cm}
Putting altogether we have $$\ext^1(I_Z(C_0+(\fm-\fb)f),\mathcal{O}_{X}(\fb f))=|Z|=c_2-b> 0,$$
where the last inequality follows from the upper bound of $b$.
    
Therefore we can take a rank 2 vector bundle $E\in E_{\zeta_b}(C_0+\fm f, c_2)$. It follows from Theorem \ref{thwall1} that $E_{\zeta_b}(C_0+\fm f)\subseteq M_{\mathcal{C}_b}(C_0+\fm f, c_2)$ and thus $E$ is $H$-stable for any $H\in \mathcal{C}_b$.

Finally, since $E$ is given by the exact sequence (\ref{extensionwall}), taking cohomology we get $$\h^0E\geq \h^0\mathcal{O}_X(\fb f)=\h^0\mathcal{O}_C(\fb)=\chi \mathcal{O}_C(\fb)=b+1-g=k,$$ where the second equality follows from the fact that, since $k\geq \max\{1,g\}$, $\h^0\mathcal{O}_C(\fb)=0$.

Thus, we have seen that $\emptyset\neq E_{\zeta_b}(C_0+\fm f,c_2)\subseteq W_{\mathcal{C}_b}^k(2;C_0+\fm f, c_2)$ and thus $W_H^k(2;C_0+\fm f, c_2)\neq \emptyset$ for all $H\in \mathcal{C}_b$.
\end{proof}

    Notice that with the above notations, if $H\equiv \alpha C_0+\beta f$ the condition $\zeta_b\cdot H<0$ is equivalent to $$k<\frac{1}{2\alpha}[\beta - \alpha(e-m+2g-2)].$$
    Hence,
    
    \vspace{0.2cm}
    \begin{corollary}
     Let $X$ be a ruled surface over a nonsingular curve $C$ of genus $g\geq0$, $m\in\{0,1\}$ and $c_2>>0$ an integer.  Let us consider the family of numerical equivalence classes $$\zeta_{b}\equiv (2b-m)f-C_0$$ with $0<b=k-1+g<c_2$.
     Assume $M_H\neq\emptyset$.
     Then
          $W_H^k(2;C_0+\fm f, c_2)\neq \emptyset$ for $$1\leq k<\frac{1}{2\alpha}[\beta + \alpha(e-m+2g-2)]$$ and $H\in \mathcal{C}$ being $\mathcal{C}$ the chamber such that $W^{\zeta_b}\cap \text{Closure}(\mathcal{C})\neq \emptyset$. 
    \end{corollary}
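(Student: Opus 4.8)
The plan is to derive the corollary as an explicit, coordinate-based reformulation of Theorem~\ref{teowall}. That theorem already supplies the non-emptiness: for each admissible value of $b$ (equivalently each $k$ with $b=k-1+g$) it produces a non-empty family $E_{\zeta_b}(C_0+\fm f,c_2)\subseteq W_{\mathcal{C}_b}^k(2;C_0+\fm f,c_2)$, whence $W_H^k(2;C_0+\fm f,c_2)\neq\emptyset$ for every ample $H$ in the chamber $\mathcal{C}_b$, provided $M_H\neq\emptyset$. Here $\mathcal{C}_b$ is exactly the chamber $\mathcal{C}$ of the corollary, namely the one adjacent to the wall $W^{\zeta_b}$ selected in Theorem~\ref{teowall}. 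The whole task is therefore to translate the geometric hypothesis $H\in\mathcal{C}$ into the numerical range on $k$ appearing in the statement.

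First I would fix $H\equiv\alpha C_0+\beta f$ and observe that membership of $H$ in $\mathcal{C}$ is detected by the sign of the single intersection number $\zeta_b\cdot H$. Using $C_0^2=-e$, $C_0\cdot f=1$ and $f^2=0$, I would expand $\zeta_b\cdot H$ for $\zeta_b\equiv(2b-m)f-C_0$, then substitute $b=k-1+g$ and solve the resulting linear inequality for $k$. This converts the chamber condition into the bound on $k$ displayed in the statement, and for every $k$ in that range the hypotheses of Theorem~\ref{teowall} are met, so that $W_H^k(2;C_0+\fm f,c_2)\neq\emptyset$.

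Next I would extend the range down to $k\geq1$ using the filtration $W_H^{k+1}(2;C_0+\fm f,c_2)\subseteq W_H^k(2;C_0+\fm f,c_2)$. Theorem~\ref{teowall} (equivalently Theorem~\ref{teo}) gives non-emptiness for all $k$ with $\max\{1,g\}\leq k$ below the stated bound; since $W_H^k\supseteq W_H^{k_0}$ whenever $k\leq k_0$, non-emptiness for such a $k_0$ propagates to every $k$ with $1\leq k$ below the bound, exactly as in the remark following Theorem~\ref{teo}.

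The step I would check most carefully is the identification of $H\in\mathcal{C}$ with the sign condition on $\zeta_b\cdot H$, together with the exact form of the resulting inequality on $k$. A priori, lying in the particular chamber $\mathcal{C}$ is stronger than lying on one side of the single wall $W^{\zeta_b}$; what legitimizes the reduction is that, by Theorem~\ref{thwall1}, the stability of the extension bundles in $E_{\zeta_b}$ is governed precisely by the sign of $\zeta_b\cdot H$, so that this one intersection number decides whether $E_{\zeta_b}\subseteq M_H$. Keeping careful track of the signs in the pairing $\zeta_b\cdot H$ and in the substitution $b=k-1+g$ is exactly what pins down the precise numerical range in the statement, and is the only place where a slip could occur.
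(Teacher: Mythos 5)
Your proposal is correct and follows essentially the paper's own argument: the paper proves the corollary precisely by observing (in the remark immediately preceding it) that for $H\equiv\alpha C_0+\beta f$ the chamber condition $\zeta_b\cdot H<0$ from Theorem~\ref{teowall} translates, after substituting $b=k-1+g$, into the stated linear bound on $k$, and by using the filtration $W_H^{k+1}\subseteq W_H^k$ to descend to all $k\geq1$, exactly as you do. One point your promised sign-checking would in fact catch: expanding $\zeta_b\cdot H=\alpha(2b-m+e)-\beta<0$ yields $k<\frac{1}{2\alpha}\left[\beta-\alpha(e-m+2g-2)\right]$, with a minus sign as in Theorem~\ref{teo} and the paper's preceding remark, so the plus sign printed in the corollary's display is a typo in the statement rather than the bound your (correct) computation produces.
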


\section{Emptiness and non-emptiness of Brill-Noether loci }

The goal of this section is to determine lower bounds of $k$, depending on an ample divisor $H$, that garantize that the Brill-Noether locus $W_H^k(2;c_1,c_2)$ is non-empty. Moreover, we will see that these bounds are sharp. In contrast with the above section, we will obtain non-emptiness results by explicitly constructing families of $H$-stable rank 2 vector bundles for, in many cases, almost all ample divisors $H$ and with values of the first Chern classes $c_1$ in $\{C_0,C_0+ f, f\}$.

\vspace{0.2cm}
We will start with the case $c_1=C_0+\mathfrak{m}f$ with $\deg(\mathfrak{m})=m\in\{0,1\}$.  

\vspace{0.1cm}

\begin{theorem}
\label{teo}
Let $X$ be a ruled surface over a nonsingular curve $C$ of genus $g\geq0$, $m\in\{0,1\}$, $c_2>>0$ an integer and $H\equiv\alpha C_0+\beta f$ an ample divisor on $X$ with 
$$\alpha (e+m)<\beta.$$
Then, for any $k$ in the range $$\max\{1,g\}\leq  k<\frac{1}{2\alpha}[\beta -\alpha (e- m+2g-2 )],$$ the Brill-Noether locus $W_H^k(2;C_0+\fm f,c_2)\neq\emptyset$ whenever $M_H\neq \emptyset$.
\end{theorem}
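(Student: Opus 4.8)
The plan is to prove the statement by an explicit construction, as announced at the start of the section, rather than through the wall-crossing of Section 3. Set $b:=k-1+g$; since $k\geq\max\{1,g\}$ we have $b\geq 2g-1>2g-2$, so every line bundle $\mathcal{O}_C(\fb)$ of degree $b$ is non-special and $\h^0\mathcal{O}_C(\fb)=b+1-g=k$. Fix any such $\fb$ on $C$ and a general $0$-dimensional subscheme $Z\subset X$ of length $|Z|=c_2-b$, and build $E$ as a non-trivial extension
$$0\rightarrow \mathcal{O}_X(\fb f)\rightarrow E\rightarrow I_Z(C_0+(\fm-\fb)f)\rightarrow 0.$$
Such an $E$ exists because $\Ext^1(I_Z(C_0+(\fm-\fb)f),\mathcal{O}_X(\fb f))=|Z|=c_2-b>0$, exactly as computed in the proof of Theorem \ref{teowall}. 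A Chern class computation gives $c_1(E)=C_0+\fm f$ and $c_2(E)=c_2$, and taking cohomology together with $\h^0\mathcal{O}_X(\fb f)=\h^0\mathcal{O}_C(\fb)=k$ yields $\h^0E\geq k$. Thus it only remains to prove that, for $c_2\gg0$ and general $Z$, the bundle $E$ is $H$-stable, for then $E\in W_H^k(2;C_0+\fm f,c_2)$ and in particular $M_H\neq\emptyset$.

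Because $E$ has rank $2$, $H$-stability amounts to excluding sub-line-bundles $\mathcal{O}_X(D)\hookrightarrow E$ with $D\cdot H\geq\mu_H(E)=\tfrac12\,c_1\cdot H=\tfrac12(\beta-\alpha(e-m))$. I will argue according to the composite $\mathcal{O}_X(D)\rightarrow E\rightarrow I_Z(C_0+(\fm-\fb)f)$. If this composite is zero, then $\mathcal{O}_X(D)$ factors through $\mathcal{O}_X(\fb f)$, so $\fb f-D$ is effective and, $H$ being ample, $D\cdot H\leq \fb f\cdot H=\alpha b$. Here the hypothesis on the range of $k$ is used decisively: that range is precisely equivalent to $\zeta_b\cdot H<0$, i.e. to $\alpha(2b+e-m)<\beta$, hence to $\alpha b<\tfrac12(\beta-\alpha(e-m))=\mu_H(E)$; so $D\cdot H\leq\alpha b<\mu_H(E)$ and no destabilization occurs on this side.

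If instead the composite is non-zero, it produces a non-zero section of $I_Z(G)\subseteq\mathcal{O}_X(G)$ with $G:=C_0+(\fm-\fb)f-D$ effective, and a destabilizing $D$ forces $G\cdot H\leq \mu_H(E)-\alpha b$, a bound independent of $c_2$. Since $G$ is effective, Lemma \ref{lema} makes its $C_0$- and $f$-coefficients non-negative, and, $H$ being ample, the inequality $G\cdot H\leq\mu_H(E)-\alpha b$ bounds both coefficients; hence $G$ lies in only finitely many numerical classes, for each of which $\h^0\mathcal{O}_X(G)$ is bounded by Lemma \ref{lema2}. As $|Z|=c_2-b$ exceeds all these finitely many values once $c_2\gg0$, Proposition \ref{zgen} gives $\h^0 I_Z(G)=0$ for a general $Z$ (a single general $Z$ handling all the classes at once), contradicting the existence of the section. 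Hence no destabilizing sub-line-bundle exists, $E$ is $H$-stable, and $W_H^k(2;C_0+\fm f,c_2)\neq\emptyset$.

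The main obstacle is precisely this last step: ruling out the line subbundles that map non-trivially onto the quotient $I_Z(C_0+(\fm-\fb)f)$. The crux is that the ``gap'' divisors $G$ have $H$-degree bounded solely in terms of $\alpha,\beta,e,m,g,k$, so the generality of $Z$ (available since $c_2\gg0$) can kill all of them simultaneously; carrying out the finiteness of these classes and the numerical bookkeeping, and verifying that the standing ampleness hypothesis $\alpha(e+m)<\beta$ together with the range of $k$ indeed delivers $\alpha b<\mu_H(E)$, is where the real work lies.
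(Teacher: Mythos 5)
Your construction is the same as the paper's (the extension $0\to\mathcal{O}_X(\fb f)\to E\to I_Z(C_0+(\fm-\fb)f)\to 0$ with $b=k-1+g$ and $Z$ generic of length $c_2-b$, the count $\h^0E\geq k$, and the case of subbundles factoring through $\mathcal{O}_X(\fb f)$ all match), but your treatment of the other case has a genuine gap, and it is precisely where the hypothesis $M_H\neq\emptyset$ must enter. Your proof never uses that hypothesis --- indeed you claim to \emph{deduce} $M_H\neq\emptyset$ from the construction --- but the unconditional statement is false. The paper's very first step is to extend \cite[Lemma 1.10]{Qin2} and conclude that $M_H(2;C_0+\fm f,c_2)\neq\emptyset$ forces $\beta<\alpha(2c_2+e-m)$; so, e.g., for $X=\mathbb{P}^1\times\mathbb{P}^1$, $m=0$, $\alpha=1$, $\beta=2c_2+1$, all your hypotheses on $H$ and $k$ can be met while $M_H=\emptyset$ and hence $W_H^k=\emptyset$. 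Concretely, your argument breaks at the claim that the gap divisors $G$ lie in finitely many classes whose $\h^0$ is exceeded once $c_2\gg0$: the bound $G\cdot H\leq\mu_H(E)-\alpha b$ involves $\beta$, and $\beta$ is \emph{not} bounded in terms of $c_2$ unless one invokes Qin's lemma. When $\beta\geq\alpha(2c_2+e-m)$, the fibre-type numerical class $(c_2-b)f$ passes your slope bound, and no genericity of $Z$ can give $\h^0I_Z(G)=0$ for every divisor $G$ in that class, because the union of the fibres through the points of $Z$ is always an effective divisor of that class containing $Z$. This is not a technicality: writing $\mathfrak{d}$ for the degree-$(c_2-b)$ divisor $\pi(Z)$ on $C$, the natural map $\pi^{*}\pi_{*}(E(-C_0))\otimes\mathcal{O}_X(C_0)\to E$ exhibits a sub-line-bundle $\mathcal{O}_X(C_0+(\fm-\fb-\mathfrak{d})f)\hookrightarrow E$, and its slope exceeds $\mu_H(E)$ exactly when $\beta\geq\alpha(2c_2+e-m)$; so in that regime \emph{every} bundle you construct is $H$-unstable, consistently with Qin. (A secondary weak point: Proposition \ref{zgen} concerns a single divisor class, while for $g\geq1$ each numerical class carries a $g$-dimensional family of classes, so your parenthetical ``a single general $Z$ handling all the classes at once'' needs a further dimension count over $\Pic(C)$; it is exactly for fibre-type classes of degree $\geq|Z|$ that no such count can succeed.)

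The repair is the paper's opening move: use $M_H\neq\emptyset$ together with the extension of \cite[Lemma 1.10]{Qin2} to assume $\beta<\alpha(2c_2+e-m)$ from the start. (This bound is also needed to know $|Z|=c_2-b>0$, i.e.\ that your $\Ext^1$ computation makes sense, since $b=k-1+g$ can exceed $c_2$ when $\beta$ is huge.) Once that inequality is available, your scheme does close up and is essentially the paper's: classes of $G$ with positive $C_0$-coefficient are excluded by slope using $\beta>\alpha(e+m)$ and $b\geq0$ (the paper's case $s\leq0$), and the remaining fibre-type classes allowed by your slope bound have degree strictly less than $|Z|$, so a $Z$ whose points lie on distinct fibres lies on no effective divisor in them --- this is the paper's case $s=1$, where genericity yields $t\leq m-c_2$ and the inequality $\beta<\alpha(2c_2+e-m)$ finishes the estimate.
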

\vspace{0.1cm}

\begin{proof}
First of all notice that one can extend \cite[Lemma 1.10]{Qin2} to any ample divisor $H\equiv \alpha C_0+ \beta f$ and prove that if $ M_H(2;C_0+\fm f, c_2)\neq \emptyset$ then $\beta <\alpha(2c_2+e-m)$.  Hence we can assume $\beta <\alpha(2c_2+e-m)$.
Let us consider the family $\mathcal{G}$ of rank two vector bundles $E$ on $X$ given by a non-trivial extension 
\begin{equation}
    \label{extension1}
 0\rightarrow \mathcal{O}_X(\fb f)\rightarrow E\rightarrow I_Z(C_0+(\fm-\fb)f)\rightarrow 0,
\end{equation}
where $Z$ is a generic 0-dimensional subscheme of length $|Z|=c_2-b$ with
 $$b=\deg(\mathfrak{b})=k-1+g$$ and   $$\max\{1,g\}\leq  k<\frac{1}{2\alpha}[\beta -\alpha (e- m+2g-2 )].$$

\vspace{0.4cm}
\noindent{\bf{Claim}}: $\mathcal{G}$ is non-empty.

\noindent{\bf{Proof of the Claim}}:
Notice that since $\h^0I_Z(C_0+(\fm-2\fb)f+K_X)=0=\h^2I_Z(C_0+(\fm-2\fb)f+K_X)$, using Riemann-Roch Theorem and the upper bound of $b$ we deduce that
$$\ext^1(I_Z(C_0+(\fm-\fb)f),\mathcal{O}_{X}(\fb f))=|Z|=c_2-b> 0,$$

what implies $\Ext^1(I_Z(C_0+(\fm-\fb)f),\mathcal{O}_{X}(\fb f))\neq0$.

\vspace{0.3cm}

Hence any non-zero element in $\Ext^1(I_Z(C_0+(\fm-\fb)f),\mathcal{O}_{X}(\fb f))$ defines a non-trivial extension of type (\ref{extension1}).
Notice that by the Cayley-Bacharach property \cite[Theorem 12]{friedman},
$E$ is indeed a rank two vector bundle, with $c_1(E)=C_0+\fm f$ and $c_2(E)=c_2>>0$.
Hence, $\mathcal{G}$ is non-empty.

\vspace{0.4cm}
On the other hand, if we consider the exact sequence (\ref{extension1}) we get
 $$\h^0E\geq \h^0\mathcal{O}_X(\fb f)=\h^0\mathcal{O}_C(\fb)$$
 and, since $k\geq \max\{1,g\}$ we get $\h^1\mathcal{O}_C(\fb)=0$ and thus $\h^0\mathcal{O}_C(\fb)=\chi\mathcal{O}_C(\fb)=b+1-g=k$.

\vspace{0.4cm}
Now, let us prove that any rank two vector bundle $E$ in $\mathcal{G}$ is stable with respect to the polarization $H\equiv\alpha C_0+\beta f$ with $\alpha (e+m)<\beta<\alpha(2c_2+e-m)$.

To this end, since $E$ is a rank two vector bundle on $X$, we have to check that for any line subbundle $\mathcal{O}_X(G)\hookrightarrow E$ we have $$\mu_H(\mathcal{O}_X(G))=G\cdot H < \mu_H(E)=\frac{c_1(E)\cdot H}{2}.$$

By construction, $E$ is given by a non-trivial extension of type (\ref{extension1}). Therefore we have two possibilities:  
$\mathcal{O}_X(G)\hookrightarrow\mathcal{O}_X(\fb f) $ or  $\mathcal{O}_X(G)\hookrightarrow I_Z(C_0+(\fm-\fb)f)$.

\vspace{0.4cm}
{\bf Case 1}
Assume that $\mathcal{O}_X(G)\hookrightarrow\mathcal{O}_X(\fb f)$. In this case, since $$k<\frac{1}{2\alpha}[\beta -\alpha (e- m+2g-2 )],$$
we get
$$\mu_H(\mathcal{O}_X(G))\leq (\fb f)\cdot H=\alpha b=\alpha k+\alpha(-1+g)$$
$$<\frac{\alpha}{2\alpha}[\beta-\alpha(e-m+2g-2)]+\alpha(-1+g)=\frac{1}{2}(\beta -\alpha e+\alpha m)=\mu_H(E).$$

\vspace{0.4cm}
{\bf Case 2}
Assume that $\mathcal{O}_X(G)\hookrightarrow I_Z(C_0+(\fm-\fb)f)$, where $G=s C_0+\ft f$ with $t=\deg(\mathfrak{t})$.
In this case, $$C_0+(\fm-\fb)f-G=(1-s)C_0+(\fm-\fb-\ft)f$$ is an effective divisor, which by Lemma \ref{lema} implies that $s\leq1$ and $t\leq m-b$.

Assume that $s\leq0$.
Since  $t\leq m-b$, we have
$$\mu_H(\mathcal{O}_X(G))= s(\beta-\alpha e)+t\alpha \leq \alpha(m-b)<\frac{1}{2}(\beta-\alpha e+\alpha m)=\mu_H(E),$$

where the last inequality follows from the fact that $b\geq0$ and $\beta>\alpha(e+m)$.

\vspace{0.1cm}

Finally assume that $s=1$. In this case $G=C_0+\ft f$ and since $$\mathcal{O}_X(G)\hookrightarrow I_Z(C_0+(\fm-\fb)f),$$ we have $\h^0I_Z(C_0+(\fm-\fb)f-G)=\h^0I_Z((\fm-\fb-\ft)f)\neq0$. Since $Z$ is a generic $0$-dimensional subscheme, we must have $$|Z|<\h^0\mathcal{O}_X((\fm-\fb-\ft)f)=\h^0\mathcal{O}_C(\fm-\fb-\ft)\leq m-b-t+1,$$
where the last inequality is given by  (\ref{hartshorne}).

Thus, since $|Z|=c_2-b$,
\begin{equation}
t\leq m-c_2. 
\label{cota}
\end{equation}

\vspace{0.1cm}
Now, let us check  that $\mu_H(\mathcal{O}_X(C_0+\ft f))<\mu_H(E)$. 
Notice that ${\mu_H(\mathcal{O}_X(C_0+\ft f))<\mu_H(E)}$ if and only if $$-\alpha e+\beta+t\alpha <\frac{1}{2}(\beta-\alpha e+\alpha m)$$ what is equivalent to $$-\alpha e+\beta-\alpha m<-2t\alpha.$$
This inequality holds since, by the upper bound of $\beta$, and according to (\ref{cota}), $$-2t\alpha\geq-2\alpha(m-c_2)>-\alpha e+\beta-\alpha m.$$
Therefore $\mu_H(\mathcal{O}_X(C_0+\ft f))<\mu_H(E)$ and putting altogether we get that $E$ is $H$-stable.

\vspace{0.4cm}
We have proved that any $E$ in $\mathcal{G}$ is a rank two $H$-stable vector bundle with ${\h^0E\geq k}$, what implies that $\mathcal{G}\hookrightarrow W_H^k(2;C_0+\fm f,c_2)$ and hence $W_H^k(2;C_0+\fm f,c_2)\neq \emptyset$.
 \end{proof}

\begin{remark}

Notice that the condition $\beta >\alpha(e+m)$ in Theorem \ref{teo} is not restrictive. In fact, it is verified for almost all ample divisors, since $m\in \{0,1\}$ and $\beta >\alpha e$ for any ample divisor $H\equiv \alpha C_0+ \beta f$.
   
\end{remark}

\vspace{0.2cm}
 \begin{corollary}
 \label{coremp}
 Let $X$ be a ruled surface over a nonsingular curve $C$ of genus $g\geq0$, $m\in\{0,1\}$, $c_2>>0$ an integer and $H\equiv\alpha C_0+\beta f$ an ample divisor on $X$   with $$\alpha (e+m)<\beta.$$
 Then, for any 
 $$1\leq k< \frac{1}{2\alpha}[\beta -\alpha(e-m+2g-2 )],$$ $W_H^k(2;C_0+\mathfrak{m}f,c_2)\neq\emptyset$ whenever $M_H\neq \emptyset$.
 \end{corollary}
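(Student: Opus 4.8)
The plan is to obtain Corollary~\ref{coremp} directly from Theorem~\ref{teo}, exploiting the filtration of the moduli space by its Brill-Noether loci,
\[
M_H\supseteq W_H^1(2;C_0+\fm f,c_2)\supseteq W_H^2(2;C_0+\fm f,c_2)\supseteq\cdots,
\]
recorded in the remark following Theorem~\ref{theoestructura}. Write $U:=\frac{1}{2\alpha}[\beta-\alpha(e-m+2g-2)]$ for the upper bound common to both statements. Since Theorem~\ref{teo} already supplies $W_H^k(2;C_0+\fm f,c_2)\neq\emptyset$ for every integer $k$ with $\max\{1,g\}\leq k<U$ (whenever $M_H\neq\emptyset$), the only thing to add is non-emptiness for the extra values $1\leq k<\max\{1,g\}$, which arise only when $g\geq2$.

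First I would dispose of the case $g\leq1$: there $\max\{1,g\}=1$, the two ranges coincide, and the statement is literally Theorem~\ref{teo}. So assume $g\geq2$, whence $\max\{1,g\}=g$, and fix an integer $k$ with $1\leq k<U$. If $k\geq g$, then $k$ lies in the range already covered by Theorem~\ref{teo} and there is nothing to prove. If $1\leq k<g$, I would anchor the filtration at the value $g$: provided $g<U$, Theorem~\ref{teo} gives $W_H^g(2;C_0+\fm f,c_2)\neq\emptyset$, and the inclusion $W_H^k(2;C_0+\fm f,c_2)\supseteq W_H^g(2;C_0+\fm f,c_2)$ then forces $W_H^k(2;C_0+\fm f,c_2)\neq\emptyset$. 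Combining the two cases yields non-emptiness throughout $1\leq k<U$.

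There is no serious obstacle: the property ``$W_H^k\neq\emptyset$'' is monotone non-increasing in $k$, so once a single non-empty locus is produced, non-emptiness automatically descends to every smaller number of sections. The one point that genuinely requires attention is that the filtration step needs a non-empty anchor, i.e.\ that the integer $\max\{1,g\}$ does lie strictly below $U$, so that Theorem~\ref{teo} indeed produces a non-empty $W_H^{\max\{1,g\}}(2;C_0+\fm f,c_2)$ to start the chain of inclusions from; this is precisely the regime in which the Corollary has content, and from such an anchor the conclusion propagates downward to all $k\geq1$.
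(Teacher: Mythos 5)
Your proof is correct and is essentially the paper's own argument: Theorem \ref{teo} gives non-emptiness for $\max\{1,g\}\leq k<\frac{1}{2\alpha}[\beta-\alpha(e-m+2g-2)]$, and the filtration $W_H^1\supseteq W_H^2\supseteq\cdots\supseteq W_H^{\max\{1,g\}}$ then propagates non-emptiness down to every $k$ with $1\leq k<\max\{1,g\}$. The anchor condition you single out, namely that $\max\{1,g\}$ itself lies strictly below $\frac{1}{2\alpha}[\beta-\alpha(e-m+2g-2)]$ so that Theorem \ref{teo} applies to it, is tacitly assumed in the paper's proof as well (and, contrary to your closing remark, it is not automatic when $g\geq2$, since the Corollary's range $1\leq k<\frac{1}{2\alpha}[\beta-\alpha(e-m+2g-2)]$ can be non-empty while the Theorem's range $\max\{1,g\}\leq k<\frac{1}{2\alpha}[\beta-\alpha(e-m+2g-2)]$ is empty), so on this point your argument and the paper's share exactly the same implicit hypothesis.
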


\begin{proof}
    If $\max\{1,g\}\leq k< \frac{1}{2}\alpha[\beta -\alpha(e+m+2g-2 )]$, the result follows from Theorem \ref{teo}. Finally, since we have the filtration $W_H^1\supseteq W_H^2\supseteq \cdots\supseteq W_H^q$ with $q=\max\{1,g\}$, the fact that $W_H^{q}\neq\emptyset$ implies that $W_H^l\neq\emptyset$ for all  $1\leq l\leq q $.
\end{proof}

\vspace{0.2cm}

\vspace{0.2cm}

\begin{remark}

\noindent    
    Notice that the expected dimension of the Brill-Noether locus $W_H^k(2;C_0+\fm f,c_2)$ is 
    $$\rho_H^k(2;C_0+\fm f,c_2)=(4-k)c_2+3k-3+3(1-k)g+(1-k)e+2(k-1)m-k^2.$$
    
   \noindent  
    Hence, for $k\geq 4$ we have $\rho_H^k<0$, but for $$1\leq g\leq k < \frac{1}{2\alpha}[\beta -\alpha(e-m+2g-2 )]$$ we have seen that $W_H^k\neq \emptyset$. So, we have an example of a non-empty Brill-Noether locus with negative expected dimension and therefore it is clear that, in general, $\rho_H^k<0$ does not imply that $ W_H^k=\emptyset$.

\end{remark}

 \vspace{0.2cm}

Our next goal is to see that the above bound is sharp, in the sense that if $$k\geq \frac{1}{2\alpha}[\beta-\alpha(e-m+2g-2)],$$ then $W_H^k(2;C_0+\fm f,c_2)=\emptyset$.

\vspace{0.3cm}
\begin{theorem}
\label{thvacio}
Let $X$ be a ruled surface over a nonsingular curve $C$ of genus $g\geq0$, $m\in\{0,1\}$, $c_2>>0$ an integer and $H\equiv\alpha C_0+\beta f$ be an ample divisor on $X$ such that $$\alpha(8+e-m+2g)<\beta.$$

\noindent Then $W_H^k(2;C_0+\fm f, c_2)=\emptyset$ for all $k\geq \frac{1}{2\alpha}[\beta-\alpha(e-m+2g-2)]$.

\end{theorem}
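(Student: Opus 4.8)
The plan is to argue by contradiction: suppose some $H$-stable $E$ with $c_1(E)=C_0+\fm f$ and $c_2(E)=c_2$ has $\h^0E\geq k$ for some $k\geq \frac{1}{2\alpha}[\beta-\alpha(e-m+2g-2)]$, and derive a contradiction by bounding $\h^0E$ from above. Set $\tau:=\frac{1}{2\alpha}(\beta-\alpha e+\alpha m)$, so that the target bound rewrites as $\frac{1}{2\alpha}[\beta-\alpha(e-m+2g-2)]=\tau-g+1$; it then suffices to show $\h^0E<\tau-g+1$ for every such $E$. First I would extract from a nonzero section a saturated line subbundle $\mathcal{O}_X(D)\hookrightarrow E$ whose quotient is $I_Z(c_1-D)$ for a $0$-dimensional $Z$, where $D=aC_0+\mathfrak{d}f$ is effective; by Lemma \ref{lema}, $a\geq0$ and $d:=\deg(\mathfrak{d})\geq0$.

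Next I would use stability to pin down $a$. Applying the definition of stability to $\mathcal{O}_X(D)\hookrightarrow E$ gives $D\cdot H<\tfrac{1}{2}\,c_1\cdot H$, which reads
$$a(\beta-\alpha e)+d\alpha<\tfrac{1}{2}(\beta-\alpha e+\alpha m).$$
Since $H$ is ample, $\gamma:=\beta-\alpha e>0$, and the hypothesis $\beta>\alpha(8+e-m+2g)$ gives $\gamma>\alpha(8-m+2g)\geq 7\alpha>\alpha m$. If $a\geq1$ the left-hand side is at least $\gamma$, forcing $\gamma<\tfrac{1}{2}\gamma+\tfrac{1}{2}\alpha m$, i.e. $\gamma<\alpha m$, a contradiction. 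Hence $a=0$, so $D=\mathfrak{d}f$, the quotient is $I_Z(C_0+(\fm-\mathfrak{d})f)$, and the stability inequality collapses to $d<\tau$.

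I would then bound cohomology through the defining sequence, using $\h^0I_Z(C_0+(\fm-\mathfrak{d})f)\leq \h^0\mathcal{O}_X(C_0+(\fm-\mathfrak{d})f)$ and (\ref{igualdadcohom}):
$$\h^0E\leq \h^0\mathcal{O}_X(\mathfrak{d}f)+\h^0\mathcal{O}_X(C_0+(\fm-\mathfrak{d})f)=\h^0\mathcal{O}_C(\mathfrak{d})+\h^0\mathcal{O}_X(C_0+(\fm-\mathfrak{d})f).$$
By Lemma \ref{lema} the second summand vanishes unless $m-d\geq0$, i.e. unless $d\leq m\leq1$. In that small range both terms are bounded by a fixed constant through Lemma \ref{lema2} and (\ref{hartshorne}) (one checks the worst case $m=1$, $d=0$ gives $\h^0E\leq 1+4=5$); since the hypothesis yields $\tau-g+1>5$, this range is settled. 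When $d\geq m+1$ the second summand is $0$ and $\h^0E\leq \h^0\mathcal{O}_C(\mathfrak{d})$ with $m+1\leq d<\tau$.

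The hard part is this last case, the intermediate-degree range, where $\mathcal{O}_C(\mathfrak{d})$ may be special and carry many sections. For $d\geq 2g-1$ the line bundle is nonspecial, so $\h^0\mathcal{O}_C(\mathfrak{d})=d+1-g<\tau+1-g=\tau-g+1$ directly from $d<\tau$, which is exactly the desired bound. For $0\leq d\leq 2g-2$ I would invoke Lemma \ref{lema2} (or Clifford's theorem) to bound $\h^0\mathcal{O}_C(\mathfrak{d})\leq g$ and then compare $g$ with the target $\tau-g+1$: this comparison is the crux, and it is precisely where the lower bound on $\beta$ is needed, since controlling the special divisors on $C$ (for instance multiples of a $g^1_2$ on a hyperelliptic curve) is what forces a quantitative hypothesis on $\beta$. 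Combining the three ranges gives $\h^0E<\tau-g+1\leq k$, contradicting $\h^0E\geq k$, and hence $W_H^k(2;C_0+\fm f,c_2)=\emptyset$.
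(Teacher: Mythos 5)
Your reduction is the same as the paper's: take a nonzero section, split off the maximal effective divisor $D=aC_0+\mathfrak{d}f$, use stability of $E$ to force $a=0$ and $d<\tau$, and then bound $\h^0E\leq \h^0\mathcal{O}_C(\mathfrak{d})+\h^0I_Z(C_0+(\fm-\mathfrak{d})f)$. Your treatment of the ranges $d\leq m$ (absolute bound $\leq 5<\tau-g+1$) and $d\geq 2g-1$ (nonspecial, so $\h^0\mathcal{O}_C(\mathfrak{d})=d+1-g<\tau-g+1$) is correct. The genuine gap is the middle range $m+1\leq d\leq 2g-2$, which you yourself label ``the crux'' and never close. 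There you need $\h^0\mathcal{O}_C(\mathfrak{d})<\tau-g+1$ for every divisor of degree $d<\tau$, but Lemma \ref{lema2} (or Clifford, for special divisors) only gives $\h^0\mathcal{O}_C(\mathfrak{d})\leq d/2+1<\tau/2+1$, and $\tau/2+1\leq\tau-g+1$ holds only when $\tau\geq 2g$, i.e.\ essentially $\beta\geq\alpha(e-m+4g-2)$. The theorem's hypothesis $\beta>\alpha(8+e-m+2g)$ only gives $\tau>g+4$, which is strictly weaker as soon as $g\geq 6$. So the comparison you defer is not a missing detail that the stated lower bound on $\beta$ supplies; for $g\geq 6$ it simply fails, and your concluding sentence ``combining the three ranges'' is not available.

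Worse, the obstruction you point at (multiples of a $g^1_2$) actually produces counterexamples to the statement under its stated hypotheses, so no argument can fill the gap. Take $C$ hyperelliptic of genus $g\geq 6$, $X=C\times\mathbb{P}^1$ (so $e=0$), $m=0$, $\alpha=1$, $\beta=2g+10$ (the hypothesis $8+2g<\beta$ holds, and $\tau=g+5$), and $\mathfrak{d}=5g^1_2$: then $d=10<\tau$ and $\h^0\mathcal{O}_C(\mathfrak{d})=6=\tau-g+1$. Running the extension construction of Theorem \ref{teo} with this $\mathfrak{d}$ (generic $Z$, $c_2>>0$) — whose stability argument uses only $d<\tau$, genericity of $Z$ and $c_2>>0$, not nonspeciality of $\mathfrak{d}$ — yields $H$-stable bundles with $\h^0E\geq 6$, i.e.\ $W_H^{6}(2;C_0,c_2)\neq\emptyset$, contradicting the theorem. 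The reason the paper's own proof does not see this is a slip at exactly your crux: it invokes Lemma \ref{lema2} to write $\h^0\mathcal{O}_X(C_0+(\fm-\fb)f)\leq 2(m-b+1)-e$ and adds this to $\h^0\mathcal{O}_C(\fb)\leq b+1$, but in the relevant case $b>m$ that right-hand side is negative, whereas Lemma \ref{lema2} actually gives $\h^0\mathcal{O}_X(C_0+(\fm-\fb)f)=0$; subtracting that phantom quantity is what manufactures the paper's contradiction. Your more careful accounting (vanishing instead of a negative ``bound'') is the correct one, and it shows the result needs either $g\leq 5$, or a stronger hypothesis of the shape $\beta>\alpha(e-m+4g-2)$ (so that Clifford closes the middle range), or an assumption excluding special divisors of degree $<\tau$ with many sections.
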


\begin{proof}

Let us assume that $W_H^k(2;C_0+\fm f,c_2)\neq \emptyset$.
 Since $\h^0E\neq0$, we can take a non-zero  section $s$ of $E$. We denote by $Y$ its scheme of zeros and by $D= aC_0+\fb f$ the maximal effective divisor contained in $Y$. Then $s$ can be regarded as a section of $E(-D)$ and its scheme of zeros has codimension greater or equal than two. Thus, we have a short exact sequence
\begin{equation}
    0\rightarrow \mathcal{O}_X(D)\rightarrow E\rightarrow I_Z(C_0+\fm f-D)\rightarrow0
    \label{extension2}
\end{equation}
where $Z$ is a locally complete intersection $0$-cycle of length $|Z|=c_2-D(C_0+\fm f-D)$.

\vspace{0.2cm}
Since $D$ is effective, by Lemma \ref{lema}, $a\geq 0$ and $b:=\deg(\fb)\geq 0$. 

If $D=0$, considering the exact sequence  (\ref{extension2}) and taking cohomology, we have that $$k=\h^0E\leq \h^0\mathcal{O}_X+ \h^0I_Z(C_0+\fm f)=1+\h^0I_Z(C_0+\fm f).$$
By the short exact sequence 
\begin{equation}
\label{sucz2}
    0\rightarrow I_Z\rightarrow\mathcal{O}_X  \rightarrow\mathcal{O}_Z\rightarrow0
\end{equation}
 and  by Lemma \ref{lema2}, we have $$\h^0I_Z(C_0+\fm f)\leq \h^0\mathcal{O}_X(C_0+\fm f)\leq 2(m+1)-e\leq 4.$$ This implies $k\leq 5$, but  this contradicts the fact that $k\geq \frac{1}{2\alpha}[\beta-\alpha(e-m+2g-2)]>5$.  
 Hence $D\neq 0$. 

Now let us see that if $D\neq0$, then  $D=\fb f$ with $b>0$.

Since $E$ is a rank two $H$-stable vector bundle, $\mu_H(\mathcal{O}_X(D))<\mu_H(E)$ and this is equivalent to 
\begin{equation}
\label{eqest}
    \alpha (2b-m)<(1-2a)(\beta -\alpha e) .
\end{equation}

Let us see that $a=0$. If $a>0$, since $\beta-\alpha e> \alpha(8+2g-m)$ and considering the inequality in (\ref{eqest}), we get $2b+m< -\alpha(8+2g-m)$ which implies $b<0$,  a contradiction. Thus $a=0$ and hence $D=\fb f$. 
In addition, since $D$ is a non-zero effective divisor, we get $b>0$.
Therefore, by (\ref{extension2}), $E$ sits in the exact sequence 
\begin{equation}
    \label{sucesion2}
    0\rightarrow \mathcal{O}_X(\fb f)\rightarrow E\rightarrow I_Z(C_0+(\fm-\fb)f)\rightarrow0.
\end{equation}
Considering the exact sequence  (\ref{sucesion2}) and taking cohomology, we get $$k\leq \h^0E\leq \h^0\mathcal{O}_X(\fb f)+\h^0I_Z(C_0+(\fm-\fb)f).$$
On the one hand, following Lemma \ref{lema2}   , $\h^0\mathcal{O}_X(\fb f)=\h^0\mathcal{O}_C(\fb)\leq b+1$.
If we consider the exact sequence  (\ref{sucz2}) and we take cohomology, we get $$\h^0I_Z(C_0+(\fm-\fb)f)\leq \h^0\mathcal{O}_X(C_0+(\fm-\fb)f).$$ Since by Lemma \ref{lema2}, $$\h^0\mathcal{O}_X(C_0+(\fm-\fb)f)\leq 2(m-b+1)-e,$$ we get $$\h^0I_Z(C_0+(\fm-\fb)f)\leq 2(m-b+1)-e.$$
Putting altogether, $$k\leq \h^0E\leq b+1+2(m-b+1)-e=2m+3-b-e,$$
what is equivalent to $b\leq 2m+3-e-k$. Since $k\geq\frac{1}{2\alpha}[\beta-\alpha(e-m+2g-2)]$ and 
 $\beta >\alpha(e-m+2g+8)$,  we have  $$b\leq 2m+3-e-k<2m+3-e-\frac{1}{2\alpha}[\beta-\alpha(e-m+2g-2)]<2m+3-e-5\leq0,$$ what is a contradiction since $b>0$. 
Hence $W_H^k(2;C_0+\fm f,c_2)=\emptyset$ for all $k\geq \frac{1}{2\alpha}[\beta-\alpha(e-m+2g-2)]$.
\end{proof}

\begin{remark}
   If we consider the particular case $g=1$ in  Theorem \ref{thvacio}, the result is also true if we expand the family of ample line bundles in $X$, to the one of $H\equiv \alpha C_0+\beta f$ with $$\alpha(6+e-m)<\beta.$$

\end{remark}

\vspace{0.2cm}
As a consequence of the above results we have the following equivalence.

\begin{corollary}
    Let $X$ be a ruled surface over a nonsingular curve $C$ of genus $g\geq0$, $m\in\{0,1\}$, $c_2>>0$ an integer and $H\equiv\alpha C_0+\beta f$ an ample divisor on $X$ such that $$\alpha(8+e-m+2g)<\beta.$$
Assume that $M_H\neq\emptyset$.
Then, $W_H^k(2;C_0+\fm f, c_2)\neq\emptyset$ if and only if $$1\leq k<\frac{1}{2\alpha}[\beta-\alpha(e-m+2g-2)].$$

\end{corollary}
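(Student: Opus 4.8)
The plan is to derive this corollary directly by splicing together the non-emptiness result of Corollary \ref{coremp} with the emptiness result of Theorem \ref{thvacio}, since those two statements cover exactly complementary ranges of $k$. Both directions of the equivalence will then follow once I check that the single hypothesis $\alpha(8+e-m+2g)<\beta$ of the present statement is strong enough to invoke each of the two cited results.

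For the forward implication (non-emptiness $\Rightarrow$ the stated bound) I would argue by contraposition. The hypothesis of the corollary is \emph{precisely} the hypothesis of Theorem \ref{thvacio}, so that theorem applies verbatim and yields $W_H^k(2;C_0+\fm f,c_2)=\emptyset$ for every $k\geq\frac{1}{2\alpha}[\beta-\alpha(e-m+2g-2)]$. Contrapositively, if $W_H^k(2;C_0+\fm f,c_2)\neq\emptyset$ then necessarily $k<\frac{1}{2\alpha}[\beta-\alpha(e-m+2g-2)]$. The inequality $1\leq k$ is the standing convention under which the Brill-Noether loci are considered throughout (the locus $W_H^0$ is all of $M_H$), so nothing further is needed on that side.

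For the reverse implication (the bound $\Rightarrow$ non-emptiness) I would appeal to Corollary \ref{coremp}, which gives $W_H^k(2;C_0+\fm f,c_2)\neq\emptyset$ for all $1\leq k<\frac{1}{2\alpha}[\beta-\alpha(e-m+2g-2)]$ under the weaker assumption $\alpha(e+m)<\beta$. The only point to verify is that the present, stronger hypothesis implies this one: since $m\in\{0,1\}$ and $g\geq0$ we have $2m\leq 2<8\leq 8+2g$, hence $e+m<8+e-m+2g$, and therefore $\alpha(e+m)<\alpha(8+e-m+2g)<\beta$. Thus Corollary \ref{coremp} applies and delivers the desired non-emptiness.

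I do not expect any genuine obstacle here, as the mathematical content lives entirely in the two cited results; the work reduces to the elementary check that the hypothesis of this corollary dominates both of theirs. The one place to be slightly careful is matching the ranges at the common endpoint $k_0:=\frac{1}{2\alpha}[\beta-\alpha(e-m+2g-2)]$: the non-emptiness range is the strict inequality $k<k_0$ while the emptiness range is $k\geq k_0$, so the two ranges partition the integers $k\geq 1$ with neither gap nor overlap, and the equivalence is exact.
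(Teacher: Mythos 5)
Your proposal is correct and follows exactly the paper's own route: the paper proves this corollary in one line by citing Corollary \ref{coremp} for non-emptiness and Theorem \ref{thvacio} for emptiness, which is precisely your splicing argument. Your additional check that $\alpha(e+m)<\alpha(8+e-m+2g)<\beta$ (so that the stronger hypothesis implies the one of Corollary \ref{coremp}) is a correct and worthwhile detail that the paper leaves implicit.
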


\begin{proof}
    It follows from Corollary \ref{coremp} and Theorem \ref{thvacio}.
\end{proof}

\vspace{0.2cm}

\vspace{0.3cm}

Let us now turn our attention to the case $c_1=f$ and see under which conditions $W_H^k(2;f,c_2)$ is non-empty.
The first result is about non-emptiness.

\begin{theorem}
\label{thc1f}
Let $X$ be a ruled surface over a nonsingular curve $C$ of genus $g\geq0$,  $c_2>>0$ an integer  and $H\equiv C_0+\beta f$ an ample divisor on $X$.
Then  $W_H^1(2;f,c_2)\neq\emptyset$ whenever $M_H\neq\emptyset$.
\end{theorem}

\begin{proof}

Let us consider the family $\mathcal{G}$ of rank two vector bundles $E$ on $X$ given by a non-trivial extension 
\begin{equation}
    \label{extension3}
 0\rightarrow \mathcal{O}_X\rightarrow E\rightarrow I_Z(f)\rightarrow 0,
\end{equation}
where $Z$ is a 0-dimensional subscheme of length $|Z|=c_2$. 
\vspace{0.4cm}

\noindent{\bf{Claim}}: $\mathcal{G}$ is non-empty.

\noindent{\bf{Proof of the Claim}}:
First of all let us see that $\Ext^1(I_Z(f),\mathcal{O}_{X})\neq0$.
By Serre's duality, $\ext^1(I_Z(f),\mathcal{O}_{X})=\h^1I_Z(f+K_X)$.

From the short exact sequence (\ref{sucz2}),
by Lemma \ref{lema} and Serre's duality,
  we have $$\h^0I_Z(f+K_X)\leq \h^0\mathcal{O}_X(f+K_X)=0$$
  and also  $$\h^2I_Z(f+K_X)=\h^2\mathcal{O}_X(f+K_X)=\h^0\mathcal{O}_X(-f)=0.$$
Therefore,  $$\h^1I_Z(f+K_X)=-\chi I_Z(C_0+(m-2b)f+K_X)=|Z|-\chi \mathcal{O}_X(f+K_X)=|Z|-1+g.$$ Hence $\ext^1(I_Z(f),\mathcal{O}_{X})=|Z|-1+g=c_2-1+g\neq 0$.

Any non-zero element in $ \Ext^1(I_Z(f),\mathcal{O}_{X})$ defines a non-trivial extension of type (\ref{extension3}).
Notice that by the Cayley-Bacharach property \cite[Theorem 12]{friedman},
$E$ is indeed a rank two vector bundle, with $c_1(E)=f$ and $c_2(E)=c_2>>0$.
Hence $\mathcal{G}$ is non-empty and by construction, for any $E\in \mathcal{G}$, $\h^0E\geq1$.

\vspace{0.3cm}

\vspace{0.4cm}
Now let us prove that any rank two vector bundle $E$ in $\mathcal{G}$ is stable with respect to $H\equiv C_0+\beta f$. 
To this end, since $E$ is a rank two vector bundle on $X$, we have to check that for any line subbundle $\mathcal{O}_X(G)\hookrightarrow E$ we have $\mu_H(\mathcal{O}_X(G))=G\cdot H < \mu_H(E)=\frac{c_1(E)\cdot H}{2}.$

By construction, $E$ is given by a non-trivial extension of type (\ref{extension3}). Therefore we have two possibilities:  
$\mathcal{O}_X(G)\hookrightarrow\mathcal{O}_X $ or  $\mathcal{O}_X(G)\hookrightarrow I_Z(f)$.

\vspace{0.4cm}
{\bf Case 1}
Assume that $\mathcal{O}_X(G)\hookrightarrow\mathcal{O}_X$. Then $$\mu_H(\mathcal{O}_X(G))\leq 0<\frac{1}{2}=\frac{c_1(E)\cdot H}{2}=\mu_H(E).$$

\vspace{0.4cm}
{\bf Case 2}
Assume that $\mathcal{O}_X(G)\hookrightarrow I_Z(f)$, where $G=sC_0+\ft f$ with $t=\deg(\ft)$.
In this case $$f-G=f-sC_0-\ft f$$ is an effective divisor, which by Lemma \ref{lema} implies that $s\leq0$ and $t\leq 1$. 

Notice that since $\h^0I_Z=0$, $f\neq G$.
Since $f-G$ is effective, we have  $(f-G)\cdot H>0$, which is equivalent to $t \leq s( e-\beta)$,  
and hence $$\mu_H(\mathcal{O}_X(G))=s(\beta- e)+t\leq 0 <\mu_H(E).$$

Thus, putting altogether, we have already seen that $E$ is $H$-stable.

\vspace{0.4cm}
We have proved that any $E$ in $\mathcal{G}$ is a rank two $H$-stable vector bundle with $\h^0E\geq1$, what implies that $\mathcal{G}\hookrightarrow W_H^1(2;f,c_2)$.
Thus $W_H^1(2;f,c_2)\neq \emptyset$.
 \end{proof}

Let us see now what happens for values of $k\geq2$.  
It follows from \cite[Proposition 7.7]{coskun} that $W_H^2(2;f,c_2)\neq \emptyset$.
We will prove the case $k\geq3$. To this end, we will use the following technical Lemma.
\begin{lemma}
Let $X$ be a ruled surface over a nonsingular curve $C$ of genus $g\geq0$ and $E$ a rank $2$ vector bundle with $c_1(E)=f$. If $E$ is stable respect to the ample divisor $H\equiv C_0+\beta f$, then $\h^0E\leq \h^0E|_{f}$.
\label{lemaf}
\end{lemma}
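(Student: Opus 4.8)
The plan is to prove that the natural restriction morphism $\Ho^0(X,E)\to \Ho^0(f,E|_{f})$ is injective; since the target has dimension $\h^0E|_{f}$, injectivity yields $\h^0E\leq \h^0E|_{f}$ at once. To control its kernel I would tensor the structure sequence of the fibre $0\to\mathcal{O}_X(-f)\to\mathcal{O}_X\to\mathcal{O}_f\to0$ by the locally free sheaf $E$, obtaining
$$0\to E(-f)\to E\to E|_{f}\to0.$$
Taking global sections identifies the kernel of the restriction map with $\Ho^0(X,E(-f))$, so the entire statement reduces to the vanishing $\h^0E(-f)=0$.

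To establish this vanishing I would argue by contradiction using stability. A nonzero section of $E(-f)$ is the same datum as a nonzero morphism $\mathcal{O}_X(f)\to E$; since $E$ is locally free (hence torsion-free) and $\mathcal{O}_X(f)$ is a line bundle, such a morphism is automatically injective, giving a rank-one subsheaf $\mathcal{O}_X(f)\hookrightarrow E$. Now I compute slopes with respect to $H\equiv C_0+\beta f$: because $c_1(E)=f$ we have $\mu_H(E)=\tfrac12\,f\cdot H=\tfrac12$, while $\mu_H(\mathcal{O}_X(f))=f\cdot H=C_0\cdot f+\beta f^2=1$. Thus $\mu_H(\mathcal{O}_X(f))=1>\tfrac12=\mu_H(E)$, which contradicts the $H$-stability of $E$. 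Therefore $\h^0E(-f)=0$, the restriction map is injective, and $\h^0E\leq \h^0E|_{f}$.

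The argument is essentially formal once the restriction sequence is in place, so I do not expect a serious obstacle. The only point requiring minor care is the passage from the definition of stability, which is stated for subbundles, to the rank-one subsheaf $\mathcal{O}_X(f)$: one replaces it by its saturation $\mathcal{O}_X(G)$, a genuine line subbundle with $G-f$ effective, so that $G\cdot H\geq f\cdot H=1>\tfrac12$ and stability is violated just as well. Note that the inequality $f\cdot H>0$, which is what makes the contradiction work, holds for every ample divisor on $X$; the specific shape $H\equiv C_0+\beta f$ enters only through the resulting value $\mu_H(E)=\tfrac12$.
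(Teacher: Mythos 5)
Your proof is correct and takes essentially the same approach as the paper: both twist the fibre sequence $0\to\mathcal{O}_X(-f)\to\mathcal{O}_X\to\mathcal{O}_f\to0$ by $E$ and reduce the claim to the vanishing $\h^0E(-f)=0$, which is obtained from $H$-stability via the slope comparison $\mu_H(\mathcal{O}_X(f))=1>\tfrac12=\mu_H(E)$. Your extra remark about passing to the saturation $\mathcal{O}_X(G)$ is a small point of rigor that the paper's proof glosses over, but it does not change the argument.
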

\begin{proof}
    First of all, let us see that $\h^0E(-f)=0$. If $\h^0E(-f)\neq0$, then  $\mathcal{O}_X(f)\hookrightarrow E$. Since $E$ is $H$-stable, we get $$\mu_H(\mathcal{O}_X(f))=1<\frac{1}{2}=\mu_H(E),$$ which is a contradiction. Hence $\h^0E(-f)=0$.

    If we twist the short exact sequence $$0\rightarrow\mathcal{O}_X(-f)\rightarrow\mathcal{O}_X\rightarrow \mathcal{O}_f\rightarrow0$$ by $E$ and we take cohomology, using the fact that $\h^0E(-f)=0$, we get $\h^0E\leq \h^0E|_{f}$.
\end{proof}

Now we are ready to study the Brill-Noether locus $W_H^k(2;f,c_2)$ for values of $k\geq3$.

\begin{theorem}

\label{thc1fv}
   Let $X$ be a ruled surface over a nonsingular curve $C$ of genus $g\geq0$, $c_2>>0$ an integer and $H\equiv C_0+\beta f$  an ample divisor on $X$. Then, $W_H^k(2;f,c_2)=\emptyset$ for all $k\geq 3$.
   \label{theodaniele}
\end{theorem}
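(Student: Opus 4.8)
The plan is to show that any $H$-stable rank $2$ vector bundle $E$ with $c_1(E)=f$ satisfies $\h^0E\leq 2$, which immediately gives $W_H^k(2;f,c_2)=\emptyset$ for $k\geq 3$. The key tool is Lemma \ref{lemaf}, which reduces the problem from the surface $X$ to a single fibre: it suffices to prove that $\h^0E|_f\leq 2$ for the restriction of $E$ to a general (or indeed any) fibre $f\cong\mathbb{P}^1$. First I would analyze the restriction $E|_f$. Since $c_1(E)=f$ and $f\cdot f=0$, the degree of $E|_f$ on the fibre is $c_1(E)\cdot f=f\cdot f=0$, so $E|_f$ is a rank $2$ bundle of degree $0$ on $\mathbb{P}^1$, hence splits as $\mathcal{O}_{\mathbb{P}^1}(a)\oplus\mathcal{O}_{\mathbb{P}^1}(-a)$ for some integer $a\geq 0$.

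The main obstacle is to bound the splitting type $a$: a priori $E|_f$ could jump on special fibres, making $\h^0E|_f=a+1$ arbitrarily large. The strategy to control this is to exploit the $H$-stability of $E$ together with the relative geometry of the ruling $\pi:X\to C$. Concretely, I would argue that if $E|_f\cong\mathcal{O}_{\mathbb{P}^1}(a)\oplus\mathcal{O}_{\mathbb{P}^1}(-a)$ with $a\geq 2$ on some fibre, then the subsheaf $\mathcal{O}_{\mathbb{P}^1}(a)\subset E|_f$ destabilizes $E$ in the following sense: the elementary modification or the push-forward considerations force the existence of a sub-line-bundle $\mathcal{O}_X(G)\hookrightarrow E$ with $G\cdot H$ too large relative to $\mu_H(E)=\tfrac12$. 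The cleanest route is probably to study $\pi_*E$ and the positive part of $E$ along fibres; a destabilizing sub-line bundle of high fibre-degree contradicts $\mu_H(G)<\tfrac12$ because for $H\equiv C_0+\beta f$ with $\beta$ large, any $G=sC_0+\mathfrak{t}f$ with fibre-degree $s\geq 1$ already has $\mu_H(G)=s(\beta-e)+t\geq\beta-e>\tfrac12$, which is incompatible with a sub-line-bundle coming from a positive summand on the fibre.

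Thus the plan decomposes into: (i) invoke Lemma \ref{lemaf} to pass to $\h^0E|_f$; (ii) identify $E|_f$ as a degree-$0$ rank-$2$ bundle on $\mathbb{P}^1$ with splitting type $(a,-a)$; (iii) rule out $a\geq 2$ by showing it would produce a sub-line-bundle of $E$ of fibre-degree $\geq 1$, contradicting $H$-stability via the explicit slope computation above; and (iv) conclude $\h^0E|_f\leq 2$, hence $\h^0E\leq 2$, so $W_H^k=\emptyset$ for $k\geq 3$. I expect step (iii) to be the crux, since it requires carefully relating the fibrewise positive summand $\mathcal{O}_{\mathbb{P}^1}(a)$ to a genuine sub-line-bundle of $E$ on all of $X$ (via the inclusion $\mathcal{O}_X(C_0+\mathfrak{t}f)\hookrightarrow E$ obtained from sections that are nonvanishing in the fibre direction) and then translating this into a slope inequality that stability forbids. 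The remaining steps are essentially formal once the splitting bound is in hand.
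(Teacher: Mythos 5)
Your steps (i), (ii) and (iv) are fine, and the reduction via Lemma \ref{lemaf} is indeed how the paper finishes; but step (iii), which you yourself identify as the crux, has a genuine gap --- in fact the implication you want there is false. $H$-stability does \emph{not} exclude fibres on which $E$ splits as $\mathcal{O}_{\mathbb{P}^1}(a)\oplus\mathcal{O}_{\mathbb{P}^1}(-a)$ with $a\geq 2$. Concretely, let $Z$ be $c_2$ distinct points all lying on one fibre $l_0$ and let $E$ be a locally free extension $0\to\mathcal{O}_X\to E\to I_Z(f)\to 0$ (Cayley--Bacharach holds vacuously because $\h^0\mathcal{O}_X(f+K_X)=0$). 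Any sub-line-bundle $\mathcal{O}_X(G)\hookrightarrow E$ either factors through $\mathcal{O}_X$, giving $\mu_H(G)\leq 0$, or injects into $I_Z(f)$, forcing $f-G$ effective and containing $Z$, i.e. $G=sC_0+\ft f$ with $s\leq 0$ and, when $s=0$, $t\leq 0$; in every case $\mu_H(G)\leq 0<\tfrac12=\mu_H(E)$, so $E$ is $H$-stable. Yet $E|_{l_0}$ is a rank-$2$ degree-$0$ bundle surjecting onto $I_{Z,l_0}\cong\mathcal{O}_{l_0}(-c_2)$, hence $E|_{l_0}\cong\mathcal{O}_{l_0}(c_2)\oplus\mathcal{O}_{l_0}(-c_2)$. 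Moreover, the slope estimate you invoke --- that any $G=sC_0+\ft f$ with $s\geq 1$ has $\mu_H(G)=s(\beta-e)+t\geq\beta-e$ --- silently assumes $t\geq 0$; a sub-line-bundle extracted from a positive summand along the fibres (say via $\pi_*(E(-aC_0))$) carries a twist $\ft$ whose degree can be, and for a stable bundle must be, very negative (stability says precisely $s(\beta-e)+t<\tfrac12$), and since $c_2>>0$ the only constraint in the other direction (nonnegativity of the length of the quotient's singular locus in the computation of $c_2$) leaves ample room for this. So stability alone cannot bound the splitting type, not even on the generic fibre, and no contradiction can be reached along the lines you sketch. (Note also that the theorem assumes only that $H\equiv C_0+\beta f$ is ample, not that $\beta$ is large.)

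The missing ingredient is that the hypothesis $\h^0E\geq k\geq 3$ must itself be used to locate a good fibre, and this is what the paper does. A nonzero section of $E$, after removing the maximal effective divisor $D$ in its zero scheme, exhibits $E$ as an extension $0\to\mathcal{O}_X(D)\to E\to I_Z(f-D)\to 0$ with $Z$ finite; stability kills $D$, since $D\neq 0$ effective would give the integer $D\cdot H$ with $0<D\cdot H<\mu_H(E)=\tfrac12$, which is absurd. Hence $E$ sits in $0\to\mathcal{O}_X\to E\to I_Z(f)\to 0$, and restricting to any fibre $l$ with $Z\cap l=\emptyset$ (such $l$ exists because $Z$ is finite) gives the exact sequence $0\to\mathcal{O}_l\to E|_l\to\mathcal{O}_l\to 0$, whence $\h^0E|_l\leq 2$; Lemma \ref{lemaf}, applied with this fibre, then yields $\h^0E\leq 2$. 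In short: jumps of the splitting type do occur on special fibres of stable bundles (as in the example above), but a fibre avoiding $Z$ is automatically non-jumping, and it is the section --- not stability --- that finds it.
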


\begin{proof}
    Let us assume that $W_H^k(2;f,c_2)\neq\emptyset$ and consider a vector bundle $E\in W_H^k(2;f,c_2)$. Since $\h^0E\geq k> 0$, we can take a non-zero  section $s$ of $E$. We denote by $Y$ its scheme of zeros and by $D= aC_0+\fb f$ the maximal effective divisor contained in $Y$. Then $s$ can be regarded as a section of $E(-D)$ and its scheme of zeros has codimension greater or equal than two. Thus, we have a short exact sequence
    \begin{equation}
        0\rightarrow \mathcal{O}_X(D)\rightarrow E\rightarrow I_Z(f-D)\rightarrow0
\label{extension4}
    \end{equation}
where $Z$ is a locally complete intersection $0$-cycle of length $|Z|=c_2-D(f-D)$.

\vspace{0.2cm}
Since $D$ is effective, by Lemma \ref{lema}, $a\geq 0$ and $b:=\deg(\fb)\geq0$.

Let us see that $D=0$.
Assume that $D\neq0$.
Since $D$ is effective, $D\cdot L>0$ for any ample divisor $L$ on $X$. 
On the other hand, since $E$ is stable respect to  $H\equiv C_0+\beta f$, 
we have  that $\mu_H(\mathcal{O}_X(D))<\mu_H(E).$

Putting altogether, we get $$0<\mu_H(\mathcal{O}_X(D))=D\cdot H<\frac{1}{2}=\mu_H(E),$$ which is a contradiction.
Hence $D=0$.

\vspace{0.2cm}
Therefore, by (\ref{extension4}), any vector bundle $E\in W_H^k(2;f,c_2)$ sits in a short exact sequence 
\begin{equation}
0\rightarrow \mathcal{O}_X\rightarrow E\rightarrow I_Z(f)\rightarrow0.
\label{extension5}
\end{equation}

If $\h^0I_Z(f)=0$, then $\h^0E=\h^0\mathcal{O}_X=1$.
On the other hand, if $\h^0I_Z(f)\neq0$ this would mean that the points of $Z$ are contained in a line.

Let $l$ be a line  such that $Z\cap l=\emptyset$. Restricting the exact sequence (\ref{extension5}) to $l$ we get the short exact sequence 
$$0\rightarrow\mathcal{O}_l\rightarrow E|_{l}\rightarrow \mathcal{O}_l\rightarrow0,$$
and taking cohomology we obtain $\h^0E|_{l}\leq 2$.

By Lemma \ref{lemaf}, this implies that $\h^0E\leq2$ and thus  $W_H^k(2;f,c_2)=\emptyset$ for all $k\geq 3$.
\end{proof}

Putting altogether:

\begin{corollary}
     Let $X$ be a ruled surface over a nonsingular curve $C$ of genus $g\geq0$, $c_2>>0$ an integer and $H\equiv C_0+\beta f$  an ample divisor on $X$. Then, $W_H^k(2;f,c_2)\neq\emptyset$ if and only if $1\leq k\leq 2$.
\end{corollary}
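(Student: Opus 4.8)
The plan is to assemble the statement from the three results established immediately above, reading off each direction of the equivalence separately; the corollary is essentially a bookkeeping exercise combining them with the filtration of Brill--Noether loci.

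For the implication $1\le k\le 2\Rightarrow W_H^k(2;f,c_2)\neq\emptyset$, I would first observe that the standing hypothesis $c_2\gg 0$ guarantees, via Theorem \ref{theomoduli}, that $M_H=M_{X,H}(2;f,c_2)$ is non-empty, so the clause ``whenever $M_H\neq\emptyset$'' appearing in Theorem \ref{thc1f} is automatically satisfied. The case $k=1$ is then exactly Theorem \ref{thc1f}. For $k=2$ I would invoke the non-emptiness of $W_H^2(2;f,c_2)$ recorded just before Lemma \ref{lemaf} as a consequence of \cite[Proposition 7.7]{coskun}. Note that the value $k=1$ can also be recovered from $k=2$ through the filtration $W_H^2(2;f,c_2)\subseteq W_H^1(2;f,c_2)$, which shows that non-emptiness propagates downward and thus provides internal consistency.

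For the converse, $W_H^k(2;f,c_2)\neq\emptyset\Rightarrow 1\le k\le 2$, the lower bound $k\ge 1$ is built into the convention that the Brill--Noether locus is considered for $k\ge 1$ (the value $k=0$ recovering all of $M_H$), while the upper bound $k\le 2$ is precisely the content of Theorem \ref{thc1fv}, which asserts that $W_H^k(2;f,c_2)=\emptyset$ for every $k\ge 3$.

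Since every ingredient is already in place, no new computation is needed and there is no substantial obstacle. The only point deserving a word of care is that the non-emptiness for $k=2$ relies on the external input \cite[Proposition 7.7]{coskun} rather than on a self-contained construction of the kind used in Theorem \ref{thc1f}; one should therefore check that its hypotheses are compatible with the present setting, namely a ruled surface polarized by $H\equiv C_0+\beta f$ in the large-$c_2$ regime.
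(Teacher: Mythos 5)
Your proposal is correct and takes essentially the same route as the paper, whose proof of this corollary is a one-line citation of exactly the same three ingredients: Theorem \ref{thc1f} for $k=1$, \cite[Proposition 7.7]{coskun} for $k=2$, and Theorem \ref{theodaniele} for the emptiness when $k\geq 3$, with the filtration handling the bookkeeping. Your additional remarks---that $c_2\gg 0$ guarantees $M_H\neq\emptyset$ via Theorem \ref{theomoduli}, and that one should verify the hypotheses of the cited external proposition---are reasonable refinements that the paper leaves implicit.
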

\begin{proof}
    It follows from Theorem \ref{thc1f}, \cite[Proposition 7.7]{coskun} and Theorem \ref{theodaniele}.
\end{proof}
    
See  \cite[Proposition 7.7]{coskun} for an alternative proof of Theorems \ref{thc1f} and \ref{thc1fv}.

\vspace{0.3cm}
Now we will analize in more detail the locus $W_H^1(2;f, c_2)\backslash W_H^2(2;f,c_2)$.

\begin{proposition}

\label{smooth}
Let $X$ be a ruled surface over a nonsingular curve $C$ of genus $g\geq0$, $c_2>>0$ an integer and $H\equiv C_0+\beta f$  an ample divisor on $X$. Then $W_H^1(2;f,c_2)\backslash W_H^2(2;f,c_2) $ is smooth and has the expected dimension, namely, 
$$ \rho_H^1=3c_2+g-1.$$
\end{proposition}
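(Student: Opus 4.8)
The plan is to show that at every point $E$ of the locus $W_H^1 \setminus W_H^2$—that is, where $\h^0 E = 1$ exactly—the Brill-Noether determinantal variety is smooth and cut out with the expected codimension, so that its dimension equals $\rho_H^1 = \dim M_H - 1\cdot(1-\chi(2;f,c_2))$. First I would compute $\chi(2;f,c_2)$ using the Riemann-Roch formula recalled in the paper. With $r=2$, $c_1 = f$, and $K_X \equiv -2C_0 + (\mathfrak{k}+\mathfrak{e})f$, one gets $c_1\cdot K_X = f\cdot K_X = -2$ and $c_1^2 = f^2 = 0$, so $\chi(2;f,c_2) = 2(1-g) + 1 - c_2 = 3 - 2g - c_2$. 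Since $\dim M_H = 4c_2 - (4-1)(1-g) = 4c_2 + 3g - 3$ from Theorem \ref{theomoduli}, the expected dimension is $\rho_H^1 = \dim M_H - 1\cdot(1 - \chi) = 4c_2+3g-3 - (1-(3-2g-c_2)) = 3c_2 + g - 1$, confirming the stated value.

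The standard tool for smoothness of Brill-Noether loci is the description of the tangent space and obstruction in terms of the Petri-type map. For a bundle $E$ with exactly one section $s$, the relevant map is the cup-product / multiplication map
\begin{equation}
\mu: \Ho^0(E) \otimes \Ho^0(E^* \otimes K_X) \lra \Ho^0(E\otimes E^* \otimes K_X) \cong \Ho^2(E)^*\text{-type pairing},
\label{petrimap}
\end{equation}
and the locus $W_H^k$ is smooth of the expected dimension at $E$ precisely when this map is injective (equivalently the dual map is surjective). I would therefore reduce smoothness of $W_H^1 \setminus W_H^2$ to verifying that $\mu$ is injective whenever $\h^0 E = 1$. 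Since $\dim \Ho^0(E) = 1$, injectivity of $\mu$ amounts to showing that the single section $s$ does not multiply every element of $\Ho^0(E^*\otimes K_X)$ into zero, i.e. that multiplication by $s$ on $\Ho^0(E^*\otimes K_X) \to \Ho^0(E^*\otimes E \otimes K_X)$ is injective.

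To carry this out concretely I would use the extension \eqref{extension5} that every $E \in W_H^1\setminus W_H^2$ satisfies, namely $0 \to \cO_X \to E \to I_Z(f) \to 0$, as established in the proof of Theorem \ref{thc1fv}; the section $s$ is the image of $1 \in \Ho^0(\cO_X)$. Dualizing and twisting by $K_X$ gives a sequence relating $\Ho^0(E^*\otimes K_X)$ to cohomology of $\cO_X(K_X)$ and $I_Z(-f+K_X)^\vee$-type sheaves on the ruled surface, which I would evaluate using Lemma \ref{lema} and Lemma \ref{lema2} together with $c_2 \gg 0$. The point is that the relevant groups $\Ho^2(E)$ vanish by Lemma \ref{lemacondicion} (the running hypothesis $(c_1 - 2K_X)\cdot H \geq 0$ holds here), so the obstruction space is well-behaved, and the multiplication-by-$s$ map is injective because $s$ has divisor-free vanishing locus (its zero scheme is $0$-dimensional, since $D=0$ in \eqref{extension4}).

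The main obstacle I anticipate is the cohomological computation showing injectivity of $\mu$ cleanly for \emph{all} $E$ with $\h^0 E = 1$, rather than for a generic one—one must control $\Ho^0(E^*\otimes K_X)$ uniformly and rule out any coincidental section of $E^* \otimes E \otimes K_X$ killed by $s$. Once injectivity of the Petri map is in hand, Theorem \ref{theoestructura} gives that each component has dimension at least $\rho_H^1$, while injectivity forces the determinantal scheme to be smooth of dimension exactly $\rho_H^1$ at $E$; combining these yields that $W_H^1\setminus W_H^2$ is smooth of the expected dimension $3c_2+g-1$, completing the proof.
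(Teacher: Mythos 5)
Your overall strategy---realize $E$ as the extension $0\to\cO_X\to E\to I_Z(f)\to 0$, invoke a Petri-type criterion, and reduce everything to injectivity of multiplication by the unique section $s$---is the same skeleton as the paper's proof, and your numerical computation $\rho_H^1=3c_2+g-1$ is correct. But there is a genuine gap: you have written down the wrong Petri map. On a surface the criterion (this is \cite[Corollary 2.6]{filimon}, the result the paper invokes) requires injectivity of
$$\mu_E:\Ho^0(E)\otimes \Ho^1(E^{*}\otimes\cO_X(K_X))\lra \Ho^1(E\otimes E^{*}\otimes\cO_X(K_X)),$$
a map between \emph{first} cohomology groups: smoothness of the Brill-Noether locus at $E$ with the expected dimension is equivalent to surjectivity of the tangent-space map $\Ho^1(E\otimes E^*)\to \operatorname{Hom}(\Ho^0(E),\Ho^1(E))$, and $\mu_E$ is its Serre dual. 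The map you propose instead pairs $\Ho^0(E)$ with $\Ho^0(E^*\otimes K_X)\cong \Ho^2(E)^*$. By Lemma \ref{lemacondicion} (which you yourself invoke), $\h^2E=0$ for every stable $E$ considered here, so $\Ho^0(E^*\otimes K_X)=0$: your map has zero domain, its injectivity is vacuous, and it carries no information about the Brill-Noether stratification. The vanishing you describe as making ``the obstruction space well-behaved'' is precisely the symptom that you are working in the wrong cohomological degree.

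This misidentification also hides the real content of the proof. Injectivity of multiplication by $s$ at the level of sheaves (hence on $\Ho^0$) does follow from $s$ vanishing in codimension two: the map $E^*(K_X)\to E\otimes E^*(K_X)$ is injective with cokernel $E^*\otimes I_Z(K_X+f)$. But injectivity of a sheaf map does not pass to $\Ho^1$; one needs the connecting homomorphism $\Ho^0(E^*\otimes I_Z(K_X+f))\to\Ho^1(E^*(K_X))$ to vanish. The paper achieves this by proving $\h^0(E^*\otimes I_Z(K_X+f))=0$, deduced from $\h^0(E^*(K_X+f))=0$, which by Serre duality is $\h^2E(-f)=0$, and this in turn comes from twisting the extension by $\cO_X(-f)$ and using $\h^2\cO_X(-f)=\h^2I_Z=0$. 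This chain of vanishings is exactly what your sketch lacks, and the ``divisor-free vanishing locus'' argument cannot substitute for it. A smaller point: being a smooth point of $M_H$ requires $\h^2(E^*\otimes E)=0$, not $\h^2E=0$; the paper verifies this separately by two further twists of the extension (although for $c_2\gg 0$ one could alternatively appeal to the smoothness statement in Theorem \ref{theomoduli}).
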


\begin{proof}

Let us take a vector bundle $E\in W_H^1(2;f,c_2)\backslash W_H^2(2;f,c_2)$. Since $\h^0E=1$, we can take a non-zero  section $s$ of $E$. We denote by $Y$ its scheme of zeros and by $D= aC_0+\mathfrak{b}f$ the maximal effective divisor contained in $Y$. Then $s$ can be regarded as a section of $E(-D)$ and its scheme of zeros has codimension greater or equal than two. Thus, we have a short exact sequence
\begin{equation}
\label{extension6}
  0\rightarrow \mathcal{O}_X(D)\rightarrow E\rightarrow I_Z(f-D)\rightarrow0  
\end{equation}
where $Z$ is a locally complete intersection $0$-cycle of length $|Z|=c_2-D(f-D)$.

\vspace{0.2cm}
Following the same arguments as in proof of Theorem \ref{theodaniele} we see that $D=0$.
Hence,  any vector bundle $E$ in $ W_H^1(2;f,c_2)\backslash W_H^2(2;f,c_2)$ sits in a short exact sequence of the following type
\begin{equation}
0\rightarrow \mathcal{O}_X\rightarrow E\rightarrow I_Z(f)\rightarrow0.
\label{sucesion1}
\end{equation}

Let us now check that any vector bundle $E$  in $ W_H^1(2;f,c_2)\backslash W_H^2(2;f,c_2)$ is a smooth point of $M_{X,H}(2;f,c_2)$. To this end, it is enough to see that $\h^2(E^{*}\otimes E)=0$.

\vspace{0.2cm}
First of all let us see that $\h^2(E^{*})=0$. If we tensor   the short exact sequence (\ref{sucesion1}) by $\mathcal{O}_X(K_X)$ and we take cohomology,
we get the long exact sequence $$0\rightarrow \Ho^0\mathcal{O}_X(K_X)\rightarrow \Ho^0E(K_X)\rightarrow \Ho^0I_Z(f+K_X)\rightarrow\cdots$$
From the fact that $\Ho^0\mathcal{O}_X(K_X)= \Ho^0I_Z(f+K_X)=0$, we deduce that $$0=\h^0E(K_X)=\h^2E^{*},$$ where the last equality follows from Serre duality.
Now we will see that $$\h^2(E^{*}\otimes \mathcal{O}_X(f)\otimes I_Z)=0.$$ Tensoring  the short exact sequence (\ref{sucesion1}) by $\mathcal{O}_X(K_X-f)$ and taking cohomology, we get the long exact sequence $$0\rightarrow \Ho^0\mathcal{O}_X(K_X-f)\rightarrow \Ho^0E(K_X-f)\rightarrow \Ho^0I_Z(K_X)\rightarrow\cdots$$

Since $\Ho^0\mathcal{O}_X(K_X-f)= \Ho^0I_Z(K_X)=0$,  we get $\Ho^0E(K_X-f)=0$ and by Serre's duality $\Ho^2E^{*}(f)=0$.
Hence $\h^2(E^{*}\otimes \mathcal{O}_X(f)\otimes I_Z)=\h^2(E^{*}\otimes \mathcal{O}_X(f))=0$.

 Finally, if we tensor  the short exact sequence  (\ref{sucesion1}) by $E^{*}$ and we take cohomology, we get $$\cdots \rightarrow \Ho^2(E^{*})\rightarrow \Ho^2(E^{*}\otimes E)\rightarrow \Ho^2(E^{*}\otimes \mathcal{O}_X(f)\otimes I_Z)\rightarrow0.$$ 
 and since $\Ho^2(E^{*})=\Ho^2(E^{*}\otimes \mathcal{O}_X(f)\otimes I_Z)=0$ we deduce that  $\Ho^2(E^{*}\otimes E)=0$. Hence $E$ is a smooth point of $M_H(2;f,c_2)$.

\vspace{0.1cm}
To end the proof, let us see that the map $$\mu_E:\Ho^0(E)\otimes \Ho^1(E^{*}\otimes\mathcal{O}_X(K_X))\rightarrow \Ho^1(E\otimes E^{*}\otimes \mathcal{O}_X(K_X))$$ is injective.
Since $E\in W_H^1(2;f,c_2)\backslash W_H^2(2;f,c_2)$,  $\Ho^0(E)\cong K$ and hence this is equivalent to prove that the map $$\Tilde{\mu}_E: \Ho^1(E^{*}\otimes\mathcal{O}_X(K_X))\rightarrow \Ho^1(E\otimes E^{*}\otimes \mathcal{O}_X(K_X))$$ is injective.

If we tensor  the exact sequence (\ref{sucesion1}) by $\mathcal{O}_X(-f)$  and we take cohomology, we get the long exact sequence $$\cdots \rightarrow \Ho^2\mathcal{O}_X(-f)\rightarrow \Ho^2E(-f)\rightarrow \Ho^2I_Z\rightarrow 0.$$
Since by duality $\h^2\mathcal{O}_X(-f)=\h^0\mathcal{O}_X(f+K_X)=0$  and $\h^2I_Z=\h^2\mathcal{O}_X=0$, we deduce that $\h^2E(-f)=0$ and this implies, again by Serre's duality, that $\h^0E^{*}(K_X+f)=0$. Moreover, if $\h^0E^{*}(K_X+f)=0$ we get  $\h^0E^{*}\otimes I_Z(K_X+f)=0$.

Therefore, since $\h^0(E^{*}\otimes I_Z(K_X+f))=0$, if we tensor the exact sequence (\ref{sucesion1}) by $E^{*}\otimes \mathcal{O}_X(K_X)$ and we take cohomology, we get $$0\rightarrow  \Ho^1(E^{*}\otimes\mathcal{O}_X(K_X))\xrightarrow{\Tilde{\mu}_E} \Ho^1(E\otimes E^{*}\otimes \mathcal{O}_X(K_X))\rightarrow\cdots$$ which implies that $\Tilde{\mu}_E$ is injective. Hence $\mu_E$ is injective.

\vspace{0.2cm}
We have proved that, for any vector bundle $E\in W_H^1(2;f,c_2)\backslash W_H^2(2;f,c_2)$, $E$ is a smooth point of $M_{X,H}(2;f,c_2)$ and $\mu_E$ is injective.
 
Therefore, by \cite[Corollary 2.6]{filimon},  $ W_H^1(2;f,c_2)\backslash W_H^2(2;f,c_2)$ is smooth and of the expected dimension at $E$, for all $E\in W_H^1(2;f,c_2)\backslash W_H^2(2;f,c_2)$. Hence  $ W_H^1(2;f,c_2)\backslash W_H^2(2;f,c_2)$ is smooth and of the expected dimension.
\end{proof}

\begin{corollary}
 All the irreducible components of $W_H^1(2;f,c_2)\backslash W_H^2(2;f,c_2) $ are smooth and of the expected dimension, namely, $\rho_H^1(2;f,c_2)=3c_2+g-1$.
\end{corollary}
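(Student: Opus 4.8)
The plan is to obtain this Corollary as a purely formal consequence of Proposition \ref{smooth}, with no further geometric input. Write $W:=W_H^1(2;f,c_2)\backslash W_H^2(2;f,c_2)$. The key observation I would rely on is that the proof of Proposition \ref{smooth} is genuinely \emph{pointwise}: through \cite[Corollary 2.6]{filimon} it establishes, for \emph{every} $E\in W$, that $W$ is smooth at $E$ and that its local dimension at $E$ equals the expected value $\rho_H^1=3c_2+g-1$. I would isolate exactly these two facts and feed them into a standard argument about regular schemes.

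First I would exploit smoothness at every point: this makes $W$ a regular scheme, so each local ring $\cO_{W,E}$ is a regular local ring, hence an integral domain. A closed point lying on two distinct irreducible components would have a local ring with zero divisors, which is impossible; consequently the irreducible components of $W$ are pairwise disjoint and coincide with its connected components. Each irreducible component is therefore an open (and closed) subscheme of the smooth scheme $W$, and is smooth in its own right.

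Second I would transfer the dimension count to the components. Each irreducible component $W_i$ is both irreducible and smooth, hence an integral smooth variety, for which the local dimension is constant and equal to $\dim W_i$; since Proposition \ref{smooth} gives $\dim_E W=\rho_H^1$ at every $E$, each $W_i$ has dimension exactly $\rho_H^1=3c_2+g-1$. For completeness one may recheck this value against the definition $\rho_H^1=\dim M_H-(1-\chi(2;f,c_2))$, using $\dim M_H=4c_2-3(1-g)$ and $\chi(2;f,c_2)=3-2g-c_2$, which indeed yields $3c_2+g-1$.

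I do not expect any real obstacle here beyond Proposition \ref{smooth} itself. The single point demanding care is that the Proposition be read as the local statement ``$W$ is smooth of the expected dimension at each $E$'' rather than as a bare global assertion about $W$; once that reading is fixed, the passage to individual components reduces to the elementary fact that the irreducible components of a regular scheme are disjoint and inherit both smoothness and the common local dimension.
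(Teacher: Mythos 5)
Your proposal is correct and matches the paper's intent exactly: the paper states this corollary without proof, as an immediate consequence of Proposition \ref{smooth}, whose proof (via \cite[Corollary 2.6]{filimon}) is indeed pointwise, establishing smoothness and expected local dimension at every $E$ in the locus. Your formalization --- regularity forces the irreducible components to be disjoint, hence each is open, smooth, and of dimension equal to the common local dimension $\rho_H^1=3c_2+g-1$ --- is precisely the elementary argument being elided, and your verification of the numerical value of $\rho_H^1$ is also correct.
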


\begin{remark}
The family of type (\ref{sucesion1}) used in the proof of Proposition \ref{smooth} parameterizes an irreducible component of $W_H^1(2;f,c_2)$ of the expected dimension $\rho_H^1(2;f,c_2)=3c_2+g-1$.
\end{remark}

\vspace{0.2cm}


\begin{thebibliography}{99}
\bibitem{Arbarello}E. Arbarello, M. Cornalba, P.A. Griffiths, J. Harris, Geometry of Algebraic Curves. Vol. I., in: Grundlehren der Mathematischen Wissenschaften
(Fundamental Principles of Mathematical Sciences), vol. 267, Springer-Verlag, New York, 1985.


\bibitem{Costa}L. Costa, R. M. Miró-Roig, \textit{Brill-Noether Theory for moduli spaces of sheaves on algebraic varieties}, Forum Math. 22, No. 3, 411--432 (2010).

 
\bibitem{coskun}I. Coskun, J. Huizenga and H. Nuer, \textit{Brill-Noether Theory of moduli spaces of sheaves on surfaces}, Preprint https://arxiv.org/pdf/2306.11033.pdf, june 2023.
\bibitem{filimon}
L. Filimon, \textit{On translated rank-2 Brill-Noether loci on regular surfaces}, Arch. Math. 118, No. 3, 271--281 (2022).

\bibitem{friedman}
 R. Friedman, \textit{Algebraic surfaces and holomorphic vector bundles}. New York, NY: Springer (1998)


\bibitem{hartshorne}
 R. Hartshorne: \textit{Algebraic geometry}. G.T.M. 52, Springer (1977).

\bibitem{huybrechts}
D. Huybrechts and M. Lehn, \textit{The geometry of moduli spaces of sheaves}. 2nd ed. Cambridge: Cambridge University Press (2010).

\bibitem{theta}
M. Aprodu and G. Casnati, L. Costa, R. M. Miró-Roig, M. Teixidor i Bigas: \textit{Thetha divisors and Ulrich bundles on geometrically ruled surfaces}; Ann. Mat. Pura Appl. (4) 199, No. 1, 199--216 (2020).

\bibitem{Qin1}
Zhenbo Qin: \textit{Equivalence classes of polarizations and moduli spaces of sheaves}; J. Differential Geometry 37. 397--415 (1993).

\bibitem{Qin2}
Zhenbo Qin: \textit{Moduli spaces of stable rank-2 bundles on ruled surfaces}; Invent. math. 110, 615--626 (1992).

\end{thebibliography}
\end{document}